\let\epsilon\varepsilon
\newcommand{\tH}{\widetilde{H}}
\newcommand{\tu}{\widetilde{u}}
\newcommand{\tPi}{\widetilde{\Pi}}
\newcommand{\hPi}{\widehat{\Pi}}
\newcommand{\N}{\mathbb{N}}
\newcommand{\R}{\mathbb{R}}
\newcommand{\lay}{\mathcal{L}^L}
\newcommand{\tlayC}{\widetilde{\mathcal{L}}^{\Co, L}}
\newcommand{\tlayE}{\widetilde{\mathcal{L}}^{\Edg, L}}
\newcommand{\tlayCE}{\widetilde{\mathcal{L}}^{\CE, L}}
\newcommand{\tTCEint}{\tT^{\CE, L}_{\mathrm{int}}}
\newcommand{\tTCint}{\tT^{\Co, L}_{\mathrm{int}}}
\newcommand{\tSEint}{\tS^{\Edg, L}_{\mathrm{int}}}
\newcommand{\cutoff}{g^L}
\newcommand{\tS}{\widetilde{S}}
\newcommand{\Dpar}{D_{x_\parallel}}
\newcommand{\Dperp}{D_{x_\perp}}
\newcommand{\Ttrivial}{\check{\calT}^{\mathrm{trivial}}}
\newcommand{\Tgeneric}{\check{\calT}^{\bullet,L}_{\mathrm{geo},\sigma}}
\newcommand{\Tcor}{\check{\calT}^{\Co,L}_{\mathrm{geo},\sigma}}
\newcommand{\Tedg}{\check{\calT}^{\Edg,L}_{{\mathrm{geo}},\sigma}}
\newcommand{\Tce}{\check{\calT}^{\CE,L}_{{\mathrm{geo}},\sigma}}
\newcommand{\tD}{\widetilde{D}}
\newcommand{\hD}{\widehat{D}}
\newcommand{\hpartial}{\widehat{\partial}}
\newcommand{\hu}{\widehat{u}}
\newcommand{\hS}{\widehat{S}}
\newcommand{\hT}{\widehat{T}}
\newcommand{\tT}{\widetilde{T}}
\newcommand{\cT}{\mathcal{T}}
\newcommand{\trv}{\widetilde{r}_\mathbf{v}}
\newcommand{\tre}{\widetilde{r}_\mathbf{e}}
\newcommand{\alpham}{|\alpha|}
\newcommand{\tC}{\widetilde{C}}
\newcommand{\tgamma}{\widetilde{\gamma}}
\newcommand{\Tg}{\mathcal{T}^{L}_{\mathrm{geo},\sigma}}
\newcommand{\PiT}{\widehat \Pi^\triangle_q}
\newcommand{\PiGL}{\widehat \Pi^{\Box}_q}
\newcommand{\M}{\mathcal{M}}
\newcommand{\xh}{{\widehat x}}
\newcommand{\yh}{{\widehat y}}
\newcommand{\tx}{{\widetilde x}}
\newcommand{\ty}{{\widetilde y}}
\newcommand{\Kt}{{\widetilde K}}
\newcommand{\ut}{{\widetilde u}}
\newcommand{\Kh}{{\widehat  K}}
\newcommand{\Edg}{{\sf E}}
\newcommand{\Co}{{\sf V}}
\newcommand{\CE}{{\sf VE}}
\newcommand{\skp}[1]{\left< #1 \right>}
\newcommand{\norm}[1]{\left\| #1 \right\|}
\newcommand{\supp}{\operatorname*{supp}}
\newcommand{\eremk}{\hbox{}\hfill\rule{0.8ex}{0.8ex}}
\numberwithin{equation}{section}
\newcommand{\abs}[1]{\lvert#1\rvert}
\DeclareMathOperator{\dist}{dist}
\newcommand{\calT}{{\mathcal T}}
\newcommand{\bbN}{{\mathbb N}}
\newcommand{\bbQ}{{\mathbb Q}}
\newcommand{\bbP}{{\mathbb P}}
\newcommand{\bbR}{{\mathbb R}}
\newcommand{\tbe}{\widetilde{\mathbf{e}}}
\newcommand{\tbv}{\widetilde{\mathbf{v}}}
\newcommand{\bO}{{\mathbf O}}
\newcommand{\bP}{{\mathbf P}}
\newcommand{\VLq}{W^L_q}
\newcommand{\PiLq}{\Pi^L_q}
\newcommand{\KM}{K^{\M}}
\newcommand{\omegaeps}{\xi}
\newcommand{\omegac}{\omega_{\mathbf{v}}}
\newcommand{\omegacprime}{\omega_{\mathbf{v'}}}
\newcommand{\omegace}{\omega_{\mathbf{ve}}}
\newcommand{\omegaceprime}{\omega_{\mathbf{v'e'}}}
\newcommand{\omegae}{\omega_{\mathbf{e}}}
\newcommand{\omegaeprime}{\omega_{\mathbf{e'}}}
\newcommand{\dbeta}{\partial_x^\beta}
\newcommand{\dalpha}{\partial_x^\alpha}
\newcommand{\epar}{\mathbf{e}_\parallel}
\newcommand{\ppar}{p_{\parallel}}
\newcommand{\pperp}{p_{\perp}}
\begin{document}
\title{Exponential Convergence of $hp$ FEM for the
\\
Integral Fractional Laplacian in Polygons
\thanks{
The research of JMM was supported by the Austrian Science Fund (FWF) project F 65. 
}
}
\titlerunning{Exponential Convergence for Integral Fractional Laplacian}  

\author{
Markus Faustmann \and
Carlo Marcati \and
\\
Jens M. Melenk \and
Christoph Schwab}

\institute{
M. Faustmann \at Institut  f\"{u}r  Analysis und  Scientific Computing\\ 
Technische Universit\"{a}t Wien\\ A-1040  Vienna, Austria \\ \email{markus.faustmann@tuwien.ac.at}
\and
C. Marcati \at Dipartimento di Matematica\\
Universit{\`a} di Pavia\\
I-27100 Pavia, Italy\\
\email{carlo.marcati@unipv.it}
\and
J.M. Melenk \at Institut  f\"{u}r  Analysis und  Scientific Computing\\ 
Technische Universit\"{a}t Wien\\ A-1040  Vienna, Austria \\\email{melenk@tuwien.ac.at}
\and
Ch. Schwab \at Seminar for Applied Mathematics\\ ETH Z\"{u}rich, ETH Zentrum, 
HG  G57.1\\ CH8092 Z\"{u}rich, Switzerland \\ \email{christoph.schwab@sam.math.ethz.ch}
}

\maketitle
\begin{abstract}
We prove exponential convergence in the energy norm of $hp$ finite element discretizations 
for the integral fractional diffusion operator of order $2s\in (0,2)$
subject to homogeneous Dirichlet boundary conditions 
in bounded polygonal domains $\Omega\subset \bbR^2$.
Key ingredient in the analysis are  
the weighted analytic regularity from \cite{FMMS21_983} and 
meshes that feature anisotropic geometric refinement towards $\partial\Omega$.
\subclass{
35R11 \and   
65N12 \and   
65N30.   
}
\end{abstract}
\section{Introduction}
\label{S:introduction}
%
In recent years, mathematical and computational modelling 
in engineering and natural sciences has witnessed the emergence of 
nonlocal boundary value problems and their mathematical and numerical analysis.
For applications of fractional models, 
we refer to the surveys \cite{GunzbActa,BBNOS18,RosOton2016Surv,AinsworthEtAl_FracSurv2018} 
and the references therein.

A typical nonlocal, elliptic equation is the so-called fractional Laplacian.
In a bounded domain $\Omega\subset \bbR^d$, and for $s\in (0,1)$,
the Dirichlet problem of the fractional Laplacian reads, informally, 
for given $f:\Omega \to \bbR$, to find $u:\bbR^d\to \bbR$ such that
\begin{equation}\label{eq:FracLap}
(-\Delta)^s u = f \quad \mbox{in}\quad \Omega, 
\qquad 
u = 0 \quad \mbox{in} \;\;\Omega^c := \bbR^d\backslash \overline{\Omega}
.
\end{equation}
Nonlocality manifests here in that
the operator $(-\Delta)^s$ acts on $u$ globally (see \eqref{eq:IntFrc} ahead), 
and that 
the Dirichlet ``boundary'' condition is, in fact,
a condition on the unknown on the whole exterior of $\Omega$.
\subsection{Integral Fractional Diffusion}
\label{sec:FracDiff}
%
%
We consider a bounded, open polygon
$\Omega \subset \bbR^2$
with Lipschitz boundary $\partial \Omega$ consisting
of a finite number of straight sides (the edges of the polygon) and vertices.
For $s\in (0,1)$, there are various different possible definitions of the fractional Laplacian $(-\Delta)^s$ (cf. \cite{Kwasnicki}), 
which are equivalent on the full-space, 
but may differ on bounded domains. 
Here, we study the 
integral (Dirichlet) fractional Laplacian $(-\Delta)^s$ that,
acting on a sufficiently regular function $u$ in $\Omega$, 
reads
\begin{equation}\label{eq:IntFrc}
(-\Delta)^s u(x) := C(s, d) \mbox{P.V.} \int_{\bbR^2} \frac{u(x)-u(z)}{|x-z|^{2+2s}} dz 
\;,
\quad 
C(s,d)\coloneqq - 2^{2s}\frac{\Gamma(s+d/2)}{\pi^{d/2}\Gamma(-s)}.
\end{equation}
Here, $\mbox{P.V.}$ denotes the Cauchy principal value integral.

In order to state a variational formulation of 
\eqref{eq:FracLap}, fractional order Sobolev spaces are required.
For integer order $t \in \bbN_0$ and domain $\omega\subset \R^d$,  we denote by $H^t(\omega)$ the Hilbertian Sobolev spaces.
Fractional order Sobolev spaces for $t \in (0,1)$ are
defined through the Slobodeckij seminorm $|\cdot|_{H^t(\omega)}$,
and the corresponding norm $\|\cdot\|_{H^t(\omega)}$,
given by
\begin{align}\label{eq:FracNrm}
|v|^2_{H^t(\omega)}
=
\int_{\omega} \int_{\omega} \frac{|v(x) - v(z)|^2}{\abs{x-z}^{2+2t}}
\,dz\,dx,
\;\;
\|v\|^2_{H^t(\omega)} = \|v\|^2_{L^2(\omega)} + |v|^2_{H^t(\omega)}.
\end{align}
For $t \in (0,1)$, we employ the spaces
\begin{align} \label{eq:Htilde}
\widetilde{H}^{t}(\Omega) 
:= 
\left\{u \in H^t(\R^d) : u\equiv 0 \; \text{on} \; \R^d \backslash \overline{\Omega} \right\},
\;  
\norm{v}_{\widetilde{H}^{t}(\Omega)}^2 
:= 
\norm{v}_{H^t(\Omega)}^2 + \norm{v/r^t}_{L^2(\Omega)}^2.
\end{align}
Here and throughout, $r(x):=\operatorname{dist}(x,\partial\Omega)$
denotes the Euclidean distance of a point $x \in \Omega$ from the boundary $\partial \Omega$.
For $t > 0$, the space $H^{-t}(\Omega)$ denotes the dual space of $\widetilde{H}^t(\Omega)$,
and $\skp{\cdot,\cdot}_{L^2(\Omega)}$ denotes 
the duality pairing that extends the $L^2(\Omega)$-inner product.

The variational form of \eqref{eq:FracLap} 
reads: find $u \in \widetilde{H}^s(\Omega)$ such that, 
for all $v \in \widetilde{H}^s(\Omega)$,
\begin{equation}
\label{eq:weakform}
a(u,v):= \frac{C(s, d)}{2} \int_{\R^2}\int_{\R^2}
 \frac{(u(x)-u(z))(v(x)-v(z))}{\abs{x-z}^{2+2s}} \, dz \, dx = \skp{f,v}_{L^2(\Omega)}
\end{equation}
Existence and uniqueness of $u \in \widetilde{H}^s(\Omega)$ follow from
the Lax--Milgram Lemma for any $f \in H^{-s}(\Omega)$, upon the observation
that the bilinear form 
$a(\cdot,\cdot): \widetilde{H}^s(\Omega)\times \widetilde{H}^s(\Omega)\to \R$
is continuous and coercive,
see, e.g., \cite[Sec.~{2.1}]{acosta-borthagaray17}.

This observation implies that for any subspace $V_N\subset \widetilde{H}^s(\Omega)$ 
of finite dimension $N$,
the Galerkin discretization:
\begin{equation}\label{eq:GalV}
u_N\in V_N : \quad a(u_N,v) = \skp{f,v}_{L^2(\Omega)} \quad \forall v\in V_N
\end{equation}
admits a unique solution $u_N\in V_N$. Whence 
\begin{equation}\label{eq:QuasiOpt}
\forall v_N \in V_N:\quad 
\| u - u_N \|_{\widetilde{H}^s(\Omega)} 
\leq C 
\| u - v_N \|_{\widetilde{H}^s(\Omega)} 
\;.
\end{equation}
Convergence rates depend on the regularity of $u$ and on the structure of 
$\{ V_N \}_{N\in \bbN}$. 
We establish \emph{exponential convergence rate bounds}
for the right hand side of \eqref{eq:QuasiOpt}
under \emph{weighted, analytic regularity} of $u$
in vertex- and edge-weighted spaces in $\Omega$.
This requires $\{ V_N \}_{N\in \bbN}$ to be a family of 
finite-dimensional subspaces of $hp$-type. 
In particular, we construct a family $\{ \Pi_N \}_{N\in \N}$
of spectral element approximation operators such that 
exponential convergence rate bounds are attained in \eqref{eq:QuasiOpt} with
$v_N = \Pi_N u_N$.
\subsection{Existing Results}
\label{sec:ExRes}
In the recent work \cite{BN21}, the regularity of the solution $u$ 
of \eqref{eq:FracLap} in a certain (isotropic) Besov space on 
Lipschitz domains $\Omega$ was shown. 
This was subsequently used in 
\cite{borthagaray2021constructive} to infer algebraic 
convergence rates of Galerkin FEM in \eqref{eq:QuasiOpt},
where, in \cite{borthagaray2021constructive}, 
the spaces $\{ V_N \}_{N\in \bbN}$ are
a family of continuous, piecewise affine
Lagrangian first order Finite Elements, 
on a sequence of shape-regular triangulations
in $\Omega$ with judicious, \emph{isotropic boundary refinement}.
The necessity of such refinement
can be expected by the boundary asymptotics of the solution 
shown, e.g., in \cite{ros-oton-serra14}, 
where $u(x) \sim \dist(x,\partial\Omega)^s$ was established.

The anisotropic nature of the edge-singularities of the solution $u$
in $\Omega$ precludes high convergence rates (in terms of 
error versus number of degrees of freedom) 
for FE discretizations based on shape-regular mesh families: 
\emph{anisotropic boundary refinement} is necessary to this end.

The regularity of solutions to \eqref{eq:FracLap} has been studied
intensively in recent years.
\cite{ros-oton-serra14,Abels2020} 
established H\"older regularity of solutions in $\overline{\Omega}$, 
when $\partial\Omega$ is $C^1$, with asymptotic behavior as 
$\dist(x,\partial\Omega)^s$ for $x\in \Omega$ 
(corner domains $\Omega\subset \bbR^2$ as considered here
 are not covered by these results).
In \cite{GiEPSSt,stocek} 
vertex- and edge-singularities of solutions 
to \eqref{eq:FracLap} have been investigated formally, and 
the dominant singular terms of weak solutions $u \in \widetilde{H}^s(\Omega)$
of \eqref{eq:weakform} have been calculated, 
under provision of 
sufficiently high (finite) regularity of $f$ in \eqref{eq:FracLap}.

In \cite{FMMS21_983}, we studied elliptic regularity for \eqref{eq:FracLap}
in the case that 
a) $\Omega\subset \bbR^2$ is a polygon, with (a finite number of) straight sides,
and 
b) the data $f$ in \eqref{eq:FracLap} is analytic in $\overline{\Omega}$.
We detail the results of \cite{FMMS21_983} in Section~\ref{sec:Reg} ahead;
they constitute the basis of the proof of the main result of the present
paper, the exponential convergence rate bound \eqref{eq:hpExpConv}.

There are are several constructions of fractional
powers of the (Dirichlet) Laplacian. 
Besides the integral fractional Laplacian considered here 
and, e.g., in \cite{ros-oton-serra14,BN21,GiEPSSt}, we mention
the related, so-called \emph{spectral fractional Laplacian},
for which regularity and FE analysis was considered,
e.g., in \cite{NOS,AG17,BMNOSS17_732,BMS20_2880}.
In
\cite{BMNOSS17_732,BMS20_2880},
exponential convergence of $hp$-FEM for the spectral fractional diffusion 
in 
so-called curvilinear polygonal domains, subject to analytic data,
was proved. 
The mathematical analysis and the numerical method in these references
leveraged the reformulation of the nonlocal boundary value problem 
in terms of a degenerate, elliptic \emph{local} boundary value problem,
which can be approximated by a collection of (still local) elliptic
singular perturbation problems, for which $hp$-FEM have been shown
to deliver exponential convergence rates in \cite{melenk02,banjai-melenk-schwab19-RD}.

Numerical analysis for the integral fractional Laplacian was developed
also in recent contributions 
\cite{BLP19,acosta-borthagaray17,faustmann-karkulik-melenk20,KarMel19}.
We refer to the surveys 
\cite{BBNOS18,AinsworthEtAl_FracSurv2018} and the references there
for a comprehensive presentation and references.
None of these references establishes, in space dimension $d>1$, 
exponential rates of convergence.
\subsection{Contributions}
\label{S:contrib}
We prove exponential rate of convergence
of solutions for the homogeneous Dirichlet problem 
of the integral fractional Laplacian of order $2s\in (0,2)$ 
in polygonal domains $\Omega\subset \bbR^2$, 
subject to a source term $f$ that is analytic in $\overline{\Omega}$.

We resolve the vertex- and edge-singularities, which are 
well-known to occur due to the singular support of the solution $u$ being all of $\partial \Omega$ 
(see, e.g., \cite{Grubb15,Abels2020,GiEPSSt}) 
by 
\emph{anisotropic, geometric mesh refinement towards $\partial \Omega$}.
The class of admissible geometric meshes in $\Omega$ will consist of 
a finite union of patchwise structured geometric partitions that are images 
of partitions from a finite catalog ${\mathfrak P}$, 
as depicted in Fig.~\ref{fig:patches} below, 
similar to the construction in \cite{banjai-melenk-schwab19-RD,BMS20_2880}.
The structured, anisotropic geometric partitions
in the patches
are assumed to be obtained by a finite number $L$ of bisections.
On the corresponding global geometric partition in $\Omega$, 
the $hp$-approximation space $V_N$ in \eqref{eq:GalV}, \eqref{eq:QuasiOpt}
consists of continuous, piecewise polynomials of degree $q \sim L \geq 1$.

The principal result of the present paper can be stated as follows.
\begin{theorem}\label{thm:hpExpConv}
Let $\Omega \subset \bbR^2$ be a polygon. 
There is a sequence $\{ V_N \}_{N\geq 1}$ of $hp$-Finite Element spaces, 
with dimension not exceeding $N$, 
such that for $f$ that is analytic in $\overline{\Omega}$ and the 
solution $u$ of (\ref{eq:weakform}), the Galerkin approximations $u_N \in V_N$ 
of \eqref{eq:GalV} converge exponentially to $u$, i.e., there 
are constants $b$, $C >0$ (depending on $s$, $\Omega$, and $f$) 
such that
\begin{equation}\label{eq:hpExpConv}
\| u - u_N \|_{\tH^s(\Omega)} \leq C \exp(-b \sqrt[4]{N}).
\end{equation}
The spaces $V_N$ can be taken as the spaces $W^L_q$ (see \eqref{eq:S^q_0} for the precise definition), 
which are spaces of globally continuous, 
piecewise mapped polynomials of degree $q$ on boundary-refined meshes $\Tg$ 
(see Def.~\ref{def:bdylayer-mesh}) with $L$ layers of geometric refinement, 
for $L\sim q \sim N^{1/4}$.
\end{theorem}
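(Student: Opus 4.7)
The plan is to apply the Céa quasi-optimality \eqref{eq:QuasiOpt} and to exhibit, for each $N$, an $hp$-quasi-interpolant $\Pi_N u \in W^L_q$ with $L \sim q \sim N^{1/4}$ whose distance to $u$ in $\widetilde{H}^s(\Omega)$ decays exponentially in $q$. The crucial input is the weighted analytic regularity of $u$ from \cite{FMMS21_983}, to be recalled in Section~\ref{sec:Reg}, which encodes that $u$ is analytic in the interior of $\Omega$, that $u$ behaves like $\dist(x,\partial\Omega)^s$ along the open edges, and that vertex singularities are present and controlled in countably normed spaces weighted separately by the distances to the vertices and to the edges of $\Omega$.

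Building on this regularity, I would construct $\Pi_N u$ patchwise on the boundary-refined mesh $\Tg$ of Definition~\ref{def:bdylayer-mesh}. Each reference patch from the catalog $\mathfrak{P}$ --- interior, edge-layer, corner, and mixed corner-edge --- carries its own local approximation operator: interior patches use tensor or simplicial $p$-projection and give error $\lesssim e^{-bq}$ by analyticity; edge-layer patches exploit the anisotropic geometric refinement together with the weighted analytic boundary behaviour along the edges to produce exponential error of the type $e^{-b\min(L,q)}$; corner patches combine dyadic scaling with local $p$-projection on the geometric sub-mesh to handle the vertex singularities. The local operators must agree across interelement boundaries and respect the homogeneous Dirichlet condition, so that $\Pi_N u \in \widetilde{H}^s(\Omega)$.

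The local bounds then have to be glued into a global estimate in the nonlocal norm. I would control the Slobodeckij seminorm of \eqref{eq:FracNrm} by splitting the double integral into near interactions, handled by localization of fractional Sobolev norms combined with standard $L^2$ and $H^1$ estimates on the reference patches, and far interactions, where the algebraic decay of the kernel $|x-z|^{-2-2s}$ reduces the contribution to $L^2$-type bounds. The Hardy-type term $\|v/r^s\|_{L^2(\Omega)}$ in \eqref{eq:Htilde} is handled by an analogous patch decomposition, crucially relying on the fact that $\Pi_N u$ vanishes on $\partial\Omega$. A dimension count then yields the rate: the mesh $\Tg$ has $O(L^2)$ elements (a bounded number of patches, each carrying $O(L^2)$ anisotropic cells arising from $L$ layers toward the boundary combined with $O(L)$ geometric subdivisions toward the vertices), and with $O(q^2)$ polynomial unknowns per element $\dim W^L_q \lesssim L^2 q^2$; the choice $L \sim q$ then gives $N \lesssim q^4$ and $e^{-bq} = e^{-b N^{1/4}}$, as in \eqref{eq:hpExpConv}.

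The main obstacle I expect is precisely the rigorous localization of the nonlocal $\widetilde{H}^s$ norm without sacrificing the exponential rate. Generic localization lemmas for fractional Sobolev spaces tend to introduce polynomial or logarithmic factors in $L$ and $q$ that, while algebraically negligible, must be absorbed into the exponential decay and controlled uniformly on the anisotropically graded mesh. Verifying that the attendant constants are uniform in $L$ and $q$, and that the weighted analytic regularity of \cite{FMMS21_983} is strong enough to drive the local approximation in the correct fractional-order norm on the thin boundary-layer elements, is the technical heart of the argument.
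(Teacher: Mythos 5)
Your outline follows the paper's broad strategy (quasi\hbox{-}optimality plus a patchwise $hp$\hbox{-}interpolant on the geometric boundary-refined mesh, driven by the weighted analytic regularity of \cite{FMMS21_983}, and the dimension count $N\sim L^2q^2$ with $L\sim q$), but it leaves open precisely the two steps the paper is built around, and these are genuine gaps rather than mere technicalities to be absorbed.

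First, you propose to bound the $\tH^s(\Omega)$ error of your interpolant by splitting the Slobodeckij double integral into near/far interactions. You correctly identify this as the main obstacle, but you do not resolve it. The paper sidesteps it entirely by a different device: Proposition~\ref{prop:LocNrm} embeds the \emph{local}, weighted integer-order space $H^1_\beta(\Omega)$ into $\tH^s(\Omega)$ for $\beta\in[0,1-s)$. All $hp$-approximation error estimates in the paper are then carried out in the $H^1_\beta$-norm, which is purely local and therefore sums over elements and patches with no near/far dichotomy and no risk of picking up extra polynomial factors in $L$ or $q$. The Hardy-type term $\|v/r^s\|_{L^2}$ in \eqref{eq:Htilde} comes for free, since $\|r^{\beta-1}v\|_{L^2}$ with $\beta<1-s$ dominates it. Without this embedding, your direct kernel-splitting route would have to re-derive a localization of $\tH^s$ on an anisotropically graded family of meshes, which is a substantially harder task and is not carried out in your proposal.

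Second, and independently, you cannot interpolate $u$ itself on the elements of the boundary layer $\lay_0$: there $u\sim r^s$ is not in $W^{1,\infty}$, so the Gauss--Lobatto-type operators of Lemmas~\ref{lemma:hat_Pi_infty}--\ref{lemma:hat_Pi_1_infty} and the scaling arguments underlying Lemmas~\ref{lemma:R-corner-patch}--\ref{lemma:R-ce-patch} do not apply. The paper's remedy is the piecewise linear cutoff $\cutoff\in\mathcal S^1_0(\Omega,\Tg)$ of \eqref{eq:cutoff-prop}, which vanishes on $\lay_0$: the approximant is $\cutoff\,\Pi^L_{q-1}u\in W^L_q$, and the error decomposes via \eqref{eq:error-decomp} into a local $H^1_\beta$ term $\|\cutoff(u-\Pi^L_{q-1}u)\|_{H^1_\beta}$, supported away from $\partial\Omega$ where the interpolation estimates hold, plus $\|(1-\cutoff)u\|_{\tH^s}$, which is estimated directly from the weighted regularity and the smallness of the boundary strip $S_{c\sigma^L}$ (Lemma~\ref{lemma:u-gu}). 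Your proposal makes $\Pi_N u$ vanish on $\partial\Omega$ but gives no mechanism for controlling the interpolation error on the thin boundary-layer cells themselves; without the cutoff splitting, that contribution does not decay exponentially.
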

%
\subsection{Layout}
\label{S:outline}
In Section~\ref{sec:Reg}, we recapitulate the weighted, analytic regularity
results of~\cite{FMMS21_983}, which form the basis of the proofs of 
the exponential convergence. In Section~\ref{sec:LoclNrm}, we state an embedding
result of weighted, integer order spaces $H^1_\beta(\Omega)$ into fractional
ones, which will be instrumental in the following analysis as local constructions can easily be done in those spaces. 
Section~\ref{sec:mesh} contains the definition of the $hp$-FE spaces,
in particular of the structured geometric meshes on the reference patches,
which are simplifications of the constructions used in
\cite{banjai-melenk-schwab19-RD,BMS20_2880}.
Section~\ref{sec:hp-approx} has the key exponential approximation error bounds 
in the weighted, \emph{local} $H^1_\beta(\Omega)$-norm for the $hp$-FE spaces on the
geometric, boundary-refined meshes in the patches.
This is followed by the proof of Theorem~\ref{thm:hpExpConv}.

Appendix~\ref{sec:approx-reference-element} recapitulates the Gauss-Lobatto
interpolants in the reference elements together with their basic approximation
and stability properties from \cite{melenk02,banjai-melenk-schwab19-RD}.
In Appendix \ref{sec:norm-cutoff}, we show some technical lemmas used in the
proof of the main result.
\subsection{Notation} 
\label{sec:Notat}
Constants $C$ may be different in each occurence, 
but are independent of critical parameters of the discretization such as $N,p,L$.
We denote by $\widehat{S}\coloneqq (0,1)^2$ the reference square and by 
$\widehat{T} \coloneqq \{(x,y) \in (0,1)^2: y <  x \}$
the reference triangle.
Sets of the form $\{x = y\}$, $\{x = 0 \}$, $\{x = y\}$,
etc. refer to edges and diagonals of $\hS$ or $\hT$ and analogously 
$\{y \leq x\} = \{(x,y) \in \widehat{S} :  y \leq x\}$. 

For $q\in \N$, ${\mathbb P}_q = \operatorname{span} \{x^i y^j\,|\, i,j \ge 0, i+j \leq q\}$ 
denotes the space of polynomials of total degree $q$ and 
${\mathbb Q}_q = \operatorname{span} \{x^i y^j\,|\, 0 \leq i,j \leq q\}$ 
denotes the tensor product space of polynomial of maximum degree $q$ in each variable separately. 

For $x\in \Omega$, we recall $r(x) = \dist(x,\partial\Omega)$.
Finally, for $t>0$, we denote a $t$-neighborhood of $\partial \Omega$ 
by
\begin{equation*}
  S_t = \{x\in \Omega: r(x)< t\}.
\end{equation*}
%
\section{Analytic Regularity in Polygons with Straight Sides}
\label{sec:Reg}
We start by recapitulating the weighted spaces from \cite{FMMS21_983} 
used to describe the analytic regularity.

Recall that
$\Omega \subset \R^2$ is a bounded polygon with 
a finite number of straight sides, 
whose boundary $\partial\Omega$ is Lipschitz. 
By $\mathcal{V}$, we denote the set of vertices of the polygon 
$\Omega \subset \R^2$ and 
by $\mathcal{E}$ the set of its (open) edges. 
For $\mathbf{v} \in \mathcal{V}$ and $\mathbf{e} \in \mathcal{E}$, 
we define the distance functions 
\begin{align*} 
r_{\mathbf{v}}(x)\coloneqq|x - \mathbf{v}|, 
\qquad 
r_{\mathbf{e}}(x)\coloneqq\inf_{y \in \mathbf{e}} |x - y|, 
\qquad 
\rho_{\mathbf{v} \mathbf{e}}(x)\coloneqq r_{\mathbf{e}}(x)/r_{\mathbf{v}}(x). 
\end{align*} 
For each vertex $\mathbf{v} \in \mathcal{V}$, 
we denote by 
$\mathcal{E}_{\mathbf{v}}\coloneqq \{\mathbf{e} \in \mathcal{E}\,:\, \mathbf{v} \in \overline{\mathbf{e}}\}$
the set of all edges that meet at $\mathbf{v}$. 
For any $\mathbf{e} \in \mathcal{E}$, 
we define $\mathcal{V}_{\mathbf{e}}\coloneqq \{\mathbf{v} \in \mathcal{V}\,:\, 
\mathbf{v} \in \overline{\mathbf{e}}\}$ as set of endpoints of $\mathbf{e}$. 
For fixed, sufficiently small $\omegaeps> 0$ and for 
$\mathbf{v} \in \mathcal{V}$, $\mathbf{e} \in \mathcal{E}$, 
we define 
vertex, vertex-edge and edge neighborhoods by 
\begin{align}
\omegac^{\omegaeps} &\coloneqq \{x \in \Omega\,:\, r_{\mathbf{v}}(x) < \omegaeps 
                                     \quad \wedge \quad \rho_{\mathbf{v}\mathbf{e}}(x) \geq \omegaeps
\quad \forall \mathbf{e} \in \mathcal{E}_{\mathbf{v}}\}, 
\label{eq:omegv}
\\
\omegace^{\omegaeps} &\coloneqq \{x \in \Omega\,:\, r_{\mathbf{v}}(x) < \omegaeps 
                                     \quad \wedge \quad \rho_{\mathbf{v}\mathbf{e}}(x) < \omegaeps \},
\label{eq:omegve}
\\
\omegae^{\omegaeps} &\coloneqq \{x \in \Omega\,:\, r_{\mathbf{v}}(x) \geq \omegaeps  
                                     \quad \wedge \quad  r_{\mathbf{e}}(x) < \omegaeps^2
\quad \forall \mathbf{v} \in \mathcal{V}_{\mathbf{e}}\}.
\label{eq:omege}
\end{align}
Fig.~\ref{fig:vertex-notation}, taken from \cite{FMMS21_983}, 
illustrates this notation near a vertex $\mathbf{v} \in \mathcal{V}$ of the polygon. 
Throughout the paper, we will assume that $\omegaeps$ is small enough so
that $\omegac^{\omegaeps} \cap \omegacprime^{\omegaeps} = \emptyset$ for all $\mathbf{v} \neq
\mathbf{v}'$, that $\omegae^{\omegaeps} \cap \omegaeprime^{\omegaeps} = \emptyset$  for all
$\mathbf{e}\neq \mathbf{e}'$ and $\omegace^{\omegaeps}\cap \omegaceprime^{\omegaeps} = \emptyset$ for
all $\mathbf{v}\neq \mathbf{v}'$ and all $\mathbf{e}\neq \mathbf{e}'$. We will
also drop the superscripts $\omegaeps$  unless strictly necessary.

\begin{figure}[ht]
\begin{center}
\begin{tikzpicture}[scale=1.7]
  \def\R{3}
  \def\A{60}
\draw ({3/4*\R*cos(\A)-.1},{3/4*\R*sin(\A)}) node[above]{$\mathbf{e}'$};
\draw (0,0) node {\textbullet} node[left] {$\mathbf{v}$}; 
  \draw[-] (0, 0) -- ({\R*cos(\A)}, {\R*sin(\A)});
  \draw[-] (0, 0) -- (\R, 0); 
  \draw[dashed] (0, 0) -- ({2/3*\R*cos(\A/3)},  {2/3*\R*sin(\A/3)}); 
  \draw[dashed] (0, 0) -- ({2/3*\R*cos(2*\A/3)},{2/3*\R*sin(2*\A/3)}); 
  \draw[dashed] ({2/3*\R*cos(\A/3)}, {2/3*\R*sin(\A/3)}) -- (\R,{2/3*\R*sin(\A/3)}); 
  \draw[dashed] %
  ({2/3*\R*cos(2*\A/3)}, {2/3*\R*sin(2*\A/3)}) %
  -- ({2/3*\R*cos(2*\A/3) + \R*(1-2/3*cos(\A/3))*cos(\A)}, {2/3*\R*sin(2*\A/3) + \R*(1-2/3*cos(\A/3))*sin(\A)});
\draw (3/4*\R, 0) node [below]{$\mathbf{e}$} ;
\draw (3/8*\R ,0.05) node [above]{$\omega_{\mathbf{v}\mathbf{e}}$} ;
\draw (7/8*\R, 0.15) node [above]{$\omega_{\mathbf{e}}$} ;
\draw ({\R*cos(\A/2)}, {\R*sin(\A/2)}) node {$\Omega_{\mathrm{int}}$} ;
\draw ({3/8*\R*cos(\A/2)},{3/8*\R*sin(\A/2)-0.12}) node [above]{$\omega_{\mathbf{v}}$}; 
\draw ({3/8*\R*cos(3*\A/4)},{3/8*\R*sin(3*\A/4)}) node [above]{$\omega_{\mathbf{v}\mathbf{e}'}$} ;
\draw ({7/8*\R*cos(3*\A/4)-0.25},{7/8*\R*sin(3*\A/4)}) node [above]{$\omega_{\mathbf{e}'}$} ;
\draw [dashed,domain=0:\A] plot ({2/3*\R*cos(\x)}, {2/3*\R*sin(\x)});
\end{tikzpicture}
\end{center}
\caption{\label{fig:vertex-notation} Notation near vertex $\mathbf{v} \in \mathcal{V}$.}
\end{figure}
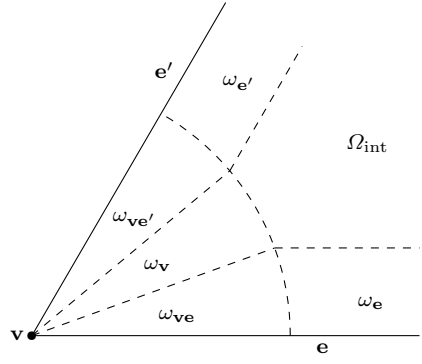

%
The polygon $\Omega$ may be decomposed 
into sectoral neighborhoods of vertices $\mathbf{v}$, 
which are unions of vertex-neighborhoods $\omega_{\mathbf{v}}$ 
and vertex-edge neighborhoods $\omega_{\mathbf{ve}}$
(as depicted in Fig.~\ref{fig:vertex-notation}), 
edge neighborhoods $\omega_{\mathbf{e}}$ (that are properly separated from vertices $\mathbf{v}$),
and an interior part  $\Omega_{\rm int}$, 
i.e., we may write
\begin{align*}
 \Omega = \bigcup_{\mathbf{v} \in \mathcal{V}}\left( \omega_{\mathbf{v}} 
        \cup \bigcup_{\mathbf{e} \in \mathcal{E}_{\mathbf{v}}} \omega_{\mathbf{ve}} \right) 
        \cup \bigcup_{\mathbf{e} \in \mathcal{E}} \omega_{\mathbf{e}} 
        \cup \Omega_{\rm int}.
\end{align*}
Each sectoral and edge neighborhood may have a different value $\omegaeps$,
but we shall work with one common (positive) value for all neighborhoods.
The set $\Omega_{\rm int}\subset\Omega $ has a positive distance from the boundary
$\partial \Omega$.
\bigskip

In a neighborhood $\omega_{\mathbf{e}}$ or $\omega_{\mathbf{v}\mathbf{e}}$, 
we denote by 
$\mathbf{e}_{\parallel}$ and $\mathbf{e}_{\perp}$ unit vectors 
such that $\mathbf{e}_{\parallel}$ is tangential to $\mathbf{e}$ 
and $\mathbf{e}_{\perp}$ is normal to $\mathbf{e}$. 
We introduce the differential operators 
\begin{align*}
D_{x_\parallel} v &\coloneqq \mathbf{e}_{\parallel} \cdot \nabla_x v, 
& D_{x_\perp} v &\coloneqq \mathbf{e}_{\perp} \cdot \nabla_x v
\end{align*}
corresponding to differentiation in the tangential and normal direction. 
Higher order tangential and normal derivatives 
in $\omega_{\mathbf{e}}$ or $\omega_{\mathbf{v}\mathbf{e}}$ 
are defined by 
$D_{x_\parallel}^j v \coloneqq D_{x_\parallel}(D_{x_\parallel}^{j-1} v)$ and
$D_{x_\perp}^j v \coloneqq D_{x_\perp}(D_{x_\perp}^{j-1} v)$ for $j>1$. \bigskip

The analytic regularity result in weighted local norms 
is \cite[Thm.~{2.1}]{FMMS21_983}.
\begin{theorem}\label{thm:mainresult}
Let $\Omega \subset \R^2$ be a bounded polygonal Lipschitz domain.
  Let the data $f \in C^{\infty}(\overline{\Omega})$ satisfy
with a constant $\gamma_f>0$
  \begin{equation}
    \label{eq:analyticdata}
\forall j \in \N_0\colon \quad 
    \sum_{\alpham = j} \|\dalpha f\|_{L^2(\Omega)} \leq \gamma_f^{j+1}j^j. 
  \end{equation}
Let $u$ be the solution of \eqref{eq:weakform}.
Let $\mathbf{v} \in \mathcal{V}$, $\mathbf{e} \in \mathcal{E}$ and
$\omega_{\mathbf{v}}$, $\omega_{\mathbf{ve}}$, $\omega_{\mathbf{e}}$ be 
fixed vertex, vertex-edge and edge-neighborhoods.
Then, there is $\gamma > 0$ depending only on $\gamma_f$, $s$, and $\Omega$ such that for 
every $\varepsilon>0$ there exists $C_\varepsilon>0$ (depending only on $\varepsilon$ and $\Omega$) 
such that the following holds: 
\begin{enumerate}
\item[(i)] 
For all $\alpha \in \mathbb{N}^2_0$
  \begin{equation}
 \label{eq:analytic-u-c-all}
 \norm{r_{\mathbf{v}}^{p-1/2-s+\varepsilon} \dalpha u}_{L^2(\omega_{\mathbf{v}})} 
 \leq C_{\varepsilon} \gamma^{{\alpham}+1}{\alpham}^{\alpham}.
  \end{equation}
\item[(ii)]
   For all $(\pperp, \ppar)\in \mathbb{N}^2_0$ it holds, 
   with $p=\pperp+\ppar$, that
\begin{align}  
 \norm{r_{\mathbf{e}}^{\pperp-1/2-s+\varepsilon} D^{\pperp}_{x_\perp} D^{\ppar}_{x_{\parallel}} u }_{L^2(\omegae)} 
 & \leq C_{\varepsilon} \gamma^{p+1} p^p,
\label{eq:analytic-u-e-all}\\
  \norm{r_{\mathbf{e}} ^{\pperp-1/2-s + \varepsilon} r_{\mathbf{v}} ^{\ppar+\varepsilon} D^{\pperp}_{x_\perp} D^{\ppar}_{x_\parallel} u}_{L^2(\omegace)}
 & \leq C_{\varepsilon} \gamma^{p+1} p^p.
\label{eq:analytic-u-ce-all}
\end{align}
\item[(iii)] 
In the interior $\Omega_{\rm int}$,  for 
all $\alpha \in \N_0^2$,
\begin{equation}
  \label{eq:analytic-u-int}
 \norm{{\dalpha} u }_{L^2(\Omega_{\rm int})} \leq \gamma^{\alpham+1}\alpham^{\alpham}.
\end{equation}
\end{enumerate}
\end{theorem}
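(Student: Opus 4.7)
My plan is a localize--differentiate--bootstrap argument, organized by the geometry of the four regions $\Omega_{\mathrm{int}}$, $\omegae$, $\omegac$, and $\omegace$, with the analytic bounds proved by induction on the differentiation order while tracking the constants in the target form $\gamma^{|\alpha|+1}|\alpha|^{|\alpha|}$. The weights $r_{\mathbf{v}}^{-1/2-s+\varepsilon}$ and $r_{\mathbf{e}}^{-1/2-s+\varepsilon}$ at the base step $|\alpha|=0$ are tuned to the boundary asymptotic $u\sim r^s \psi$ (which I would first extract via Grubb-type results on half-planes after locally flattening a side of the polygon), so that the weighted $L^2$ integral is finite; the $\varepsilon$ is a safety margin that controls a dyadic geometric series later.

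For the interior estimate (iii), I enlarge $\Omega_{\mathrm{int}}$ slightly to some $\Omega_{\mathrm{int}}' \subset \Omega$ and split $(-\Delta)^s u = f - T u$, where $T$ gathers the far-field part of the nonlocal kernel and is smoothing on $\Omega_{\mathrm{int}}$; classical analytic interior regularity for the order-$2s$ pseudodifferential operator $(-\Delta)^s$, combined with a Friedrichs-type iteration on nested interior neighborhoods, gives (iii). For the edge estimate (ii) in the special case $\pperp=0$, I flatten $\mathbf{e}$ and exploit the resulting translation invariance in $x_\parallel$: tangential difference quotients, and hence $D_{x_\parallel}^{\ppar} u$, solve a problem of the same form with analytic right-hand side, so an induction on $\ppar$ propagates the analytic bound, with the base case fed by $u\sim r_{\mathbf{e}}^s$.

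Converting tangential into normal derivatives to obtain (ii) for general $\pperp$ is where I expect the real work. Unlike for local second-order operators, one cannot algebraically solve the PDE for $D_{x_\perp}^2 u$. My preferred approach is a symbolic factorization on the half-plane model of the form $(-\Delta)^s = P_{s-1}\, D_{x_\perp}^2 + R$ modulo a lower-order pseudodifferential remainder $R$, then inversion of the order-$(2s-2)$ tangential Fourier multiplier $P_{s-1}$ against the tangential estimates already established; each application buys two normal derivatives at the price of the weight factor $r_{\mathbf{e}}^{\pperp}$, yielding the exponent $\pperp-1/2-s+\varepsilon$. An equivalent route via the Caffarelli--Silvestre extension on the flat half-plane converts the nonlocal problem to a locally defined degenerate elliptic equation in one extra variable, to which existing weighted analytic regularity machinery for second-order problems applies; either way, the essential challenge is controlling the nonlocal remainders in the weighted analytic norm uniformly in $\pperp$.

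Given the interior and edge bounds, the vertex estimate (i) follows by a dyadic decomposition of $\omegac$ into annular shells $A_k = \{2^{-k-1} \le r_{\mathbf{v}} \le 2^{-k}\}$ which I scale to unit size via $y = 2^k(x-\mathbf{v})$; on the rescaled geometry, uniform in $k$, the interior/edge estimates produce analytic bounds on $\dalpha u$ restricted to $A_k$, and the $\varepsilon$-shifted weight $r_{\mathbf{v}}^{p-1/2-s+\varepsilon}$ turns $\sum_k$ into a convergent geometric series (the shift $\varepsilon$ beats the $H^s$ energy scaling at the vertex). Finally, the combined vertex-edge estimate in $\omegace$ comes from running the dyadic vertex rescaling together with the edge tangential/normal decomposition on each shell: tangential differentiation is invariant under the vertex scaling, producing the weight $r_{\mathbf{v}}^{\ppar+\varepsilon}$, while normal differentiation still sees the edge-boundary asymptotics, producing $r_{\mathbf{e}}^{\pperp-1/2-s+\varepsilon}$. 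The principal obstacle remains the normal-derivative step (Step 3); everything else is careful bookkeeping of partition-of-unity, cutoff calculus in analytic norms, and scaling against the known $r^s$ boundary behavior.
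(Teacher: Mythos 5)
The paper does not prove this theorem; it is cited verbatim from the companion work \cite{FMMS21_983} (see the sentence immediately preceding the statement: ``The analytic regularity result in weighted local norms is [FMMS21\_983, Thm.~2.1].''). There is therefore no in-paper proof against which to compare, and your sketch has to be assessed on its own.

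On its merits: the decomposition into interior / edge / vertex / vertex--edge cases, the tangential difference-quotient bootstrap for $\pperp=0$, and the dyadic rescaling at the vertex are sensible and plausibly close to the actual strategy in \cite{FMMS21_983}. The genuine gap is exactly the normal-derivative step you flag as ``the real work,'' and neither route you propose is sound as stated. The symbolic factorization $(-\Delta)^s = P_{s-1}\,D_{x_\perp}^2 + R$ with $P_{s-1}$ a purely \emph{tangential} Fourier multiplier of order $2s-2$ cannot exist: writing $|\xi|^{2s}=(\xi_\parallel^2+\xi_\perp^2)^s$, any decomposition $|\xi|^{2s}=a(\xi_\parallel)\,\xi_\perp^2 + R(\xi)$ forces $R(\xi)\sim -a(\xi_\parallel)\xi_\perp^2$ as $\xi_\perp\to\infty$ (because $|\xi|^{2s}=o(\xi_\perp^2)$ when $s<1$), so $R$ is not lower order in the normal direction; the honest split $|\xi|^{2s}=|\xi|^{2(s-1)}\xi_\perp^2 + |\xi|^{2(s-1)}\xi_\parallel^2$ involves the multiplier $|\xi|^{2(s-1)}$, which depends on $\xi_\perp$ and is therefore \emph{not} a tangential operator that you could invert against your already-established tangential estimates. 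The Caffarelli--Silvestre route is also subtler than you indicate: for the \emph{integral} (exterior-Dirichlet) fractional Laplacian the extension lives on all of $\R^2\times(0,\infty)$ with a mixed Dirichlet/Neumann condition on $\{y=0\}$, the Dirichlet/Neumann interface sitting exactly along $\partial\Omega\times\{0\}$. That is a Signorini-type mixed-boundary degenerate problem, not the local Dirichlet extension on $\Omega\times(0,\infty)$ used for the \emph{spectral} fractional Laplacian, and ``existing weighted analytic regularity machinery for second-order problems'' does not cover it off the shelf. You have correctly located the crux of the argument, but neither mechanism you offer for it constitutes a proof.
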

\section{Embedding into weighted integer order space}
\label{sec:LoclNrm}

The nonlocal nature of the $\tH^s(\Omega)$-norm \eqref{eq:FracNrm}
is well-known to obstruct the common FE-approximation strategy to obtain
global error bounds by adding scaled, local error estimates on subdomains.
Accordingly, as proposed in \cite[Sec.~{3.4}]{FMMS-hp1d},
we localize this norm via an embedding into 
a weighted integer order space. 
While such embeddings are known (e.g. \cite[Sec. 3.4]{triebel95}), 
we provide a short proof to render the exposition self-contained.

Recall $r(x):=\operatorname{dist}(x,\partial\Omega)$ for $x\in \Omega$.
For $\beta \in [0,1)$, 
and an open set $\omega\subseteq \Omega$
denote by 
$H^1_\beta(\omega)$ the local Sobolev space 
defined via the weighted norm 
$\| \cdot \|_{H^1_\beta(\omega)}$ given by
\begin{equation}\label{eq:H1b}
\| v \|^2_{H^1_\beta(\omega)}
:= 
\| r^\beta \nabla v \|_{L^2(\omega)}^2
+ 
\| r^{\beta-1} v    \|_{L^2(\omega)}^2
.
\end{equation}
\begin{proposition}[\protect{\cite[Lem.~8]{FMMS-hp1d}}]
\label{prop:LocNrm}
Let $\Omega \subset \R^d$ denote a bounded domain with
Lipschitz boundary $\partial\Omega$,
and assume $\sigma \in (0, 1]$.
Denote by $H^1_\beta(\Omega)$ the closure of $C_0^\infty(\Omega)$
with respect to the norm $\| \cdot \|_{H^1_\beta(\Omega)}$ in \eqref{eq:H1b}.
\newline
Then, for all $\beta \in [0,1-\sigma)$, 
$H^1_\beta(\Omega)$ is continuously embedded into $\tH^\sigma(\Omega)$, 
and 
there exists a constant $C_{\beta,\sigma}(\Omega)>0$ 
such that
\begin{equation}\label{eq:LocNrm}
\forall v\in H^1_\beta(\Omega): \quad 
\| v \|_{\tH^\sigma(\Omega)}
\leq 
C_{\beta,\sigma} \| v \|_{H^1_\beta(\Omega)} 
\;.
\end{equation}
The estimate \eqref{eq:LocNrm} remains valid in the limit case $(\sigma,\beta) = (1,0)$. 
%
\end{proposition}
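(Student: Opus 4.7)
By density, it suffices to establish the bound for $v \in C_0^\infty(\Omega)$, since both sides of \eqref{eq:LocNrm} are continuous in the $H^1_\beta(\Omega)$-norm. The target norm decomposes as
\[
\|v\|_{\tH^\sigma(\Omega)}^2 = \|v\|_{L^2(\Omega)}^2 + |v|_{H^\sigma(\Omega)}^2 + \|v/r^\sigma\|_{L^2(\Omega)}^2,
\]
and I would control each summand by $\|v\|_{H^1_\beta(\Omega)}^2$. The two $L^2$-type contributions are essentially free: from $\beta \in [0, 1-\sigma)$ and the boundedness of $r$ on $\Omega$, the weights $r^{1-\beta}$ and $r^{1-\beta-\sigma}$ lie in $L^\infty(\Omega)$, whence
\[
\|v\|_{L^2(\Omega)} + \|v/r^\sigma\|_{L^2(\Omega)} \le C\, \|r^{\beta-1} v\|_{L^2(\Omega)}.
\]

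The substantive work lies in the Slobodeckij seminorm. I would split
\[
|v|_{H^\sigma(\Omega)}^2 = I_{\mathrm{near}} + I_{\mathrm{far}},
\]
where $I_{\mathrm{near}}$ and $I_{\mathrm{far}}$ are the contributions of $\{|x-y| < r(x)/2\}$ and $\{|x-y| \ge r(x)/2\}$, respectively. On the near region, $r(y)$ and $r(x+t(y-x))$ are comparable to $r(x)$ uniformly in $t \in [0,1]$. Using $|v(x)-v(y)|^2 \le |x-y|^2 \int_0^1 |\nabla v(x+t(y-x))|^2\,dt$, a change of variable $\xi = x + t(y-x)$, and Fubini gives
\[
I_{\mathrm{near}} \le C\int_\Omega |\nabla v(\xi)|^2 \int_{|h| \le C r(\xi)} |h|^{2-d-2\sigma}\,dh\,d\xi \le C\int_\Omega r(\xi)^{2-2\sigma} |\nabla v(\xi)|^2\,d\xi,
\]
where the radial integral converges because $\sigma < 1$. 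Since $2-2\sigma > 2\beta$ and $r$ is bounded, $r^{2-2\sigma} \le C r^{2\beta}$, so $I_{\mathrm{near}}$ is absorbed in $\|r^\beta \nabla v\|_{L^2(\Omega)}^2$. For $I_{\mathrm{far}}$, the elementary estimate $|v(x)-v(y)|^2 \le 2|v(x)|^2 + 2|v(y)|^2$, together with the radial bound $\int_{|y-x| \ge r(x)/2} |x-y|^{-d-2\sigma}\,dy \le C r(x)^{-2\sigma}$ and a symmetry argument for the $|v(y)|^2$ piece, reduces everything to $\|v/r^\sigma\|_{L^2(\Omega)}^2$, which has already been controlled.

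The main obstacle is the Fubini/change-of-variables step in $I_{\mathrm{near}}$: one has to check rigorously that on $\{|x-y| < r(x)/2\}$ the distances to $\partial\Omega$ of $x$, $y$, and every intermediate point $\xi = x+t(y-x)$ are pairwise comparable with constants independent of $x,y,t$, so that the weight $r(\xi)^\alpha$ may be interchanged with $r(x)^\alpha$ in the integrand. The remaining steps are routine. The limit case $(\sigma,\beta) = (1,0)$ bypasses the fractional argument entirely: the seminorm term is absent and the inequality reduces to $\|v\|_{L^2(\Omega)} \le C\|v/r\|_{L^2(\Omega)}$, immediate from $r \in L^\infty(\Omega)$.
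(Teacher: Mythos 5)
Your proof is correct, but it takes a genuinely different route from the paper. The paper argues abstractly via real interpolation: it sets $X_0 = L^2(\Omega)$, $X_1 = H^1_0(\Omega)$, constructs a cutoff $\chi_t$ supported away from a strip $S_{t/2}$ to get the $K$-functional bound $K(v,t;X_0,X_1) \le C t^{1-\beta}\|v\|_{H^1_\beta(\Omega)}$, then integrates $t^{-\sigma}K(v,t)$ against $dt/t$ (truncated to a finite upper limit) and invokes the identification $\tH^\sigma(\Omega) = (L^2(\Omega),H^1_0(\Omega))_{\sigma,2}$. You instead estimate the Slobodeckij double integral by hand: bounded weight arguments handle the two $L^2$ pieces, and the seminorm is split into a near-diagonal region (where the mean value theorem, a Fubini/change-of-variables step using the comparability $r(x)\sim r(\xi)\sim r(y)$, and the radial integral $\int_0^{cr} \rho^{1-2\sigma}d\rho \simeq r^{2-2\sigma}$ reduce to $\|r^\beta\nabla v\|_{L^2}^2$) and a far region (where $|v(x)-v(y)|^2 \lesssim |v(x)|^2 + |v(y)|^2$ and the observation that $|x-y|\ge r(x)/2$ forces $|x-y|\ge r(y)/3$ reduce both pieces to $\|v/r^\sigma\|_{L^2}^2$). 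Your approach buys transparency and self-containedness -- one can track exactly which pointwise bounds are used -- at the cost of the change-of-variables bookkeeping; the paper's approach is shorter provided one already trusts the interpolation characterization of $\tH^\sigma(\Omega)$ (which the paper cites rather than proves). One small note on your treatment of $(\sigma,\beta)=(1,0)$: stated more precisely, the gradient term $\|\nabla v\|_{L^2}$ and the Hardy term $\|v/r\|_{L^2}$ both appear on the right-hand side already (since $\beta=0$), so the only residual inequality is $\|v\|_{L^2}\le C\|v/r\|_{L^2}$, which is trivial from boundedness of $r$; the paper instead invokes a Hardy inequality from Grisvard, but your direct observation suffices and is in fact simpler.
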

\begin{proof}
We present the argument from the univariate case \cite[Lem.8]{FMMS-hp1d}, 
with the minor adaptations to the present setting.
For Banach spaces $X_1 \subset X_0$ with continuous injection,
and for $v\in X_0$, $t>0$, the $K$-functional is given by
$K(v,t; X_0, X_1):= \inf_{w \in X_1} \|v - w\|_{X_0} + t\|w\|_{X_1}$.
For $\theta \in (0,1)$ and $q\in [1,\infty)$,
the interpolation spaces (e.g. \cite[Chap.1.3]{triebel95})
$X_{\theta,q}:= (X_0,X_1)_{\theta,q}$ are given by the norm
\begin{equation}
  \label{eq:interp-norm}
\|v\|^q_{X_{\theta,q}}
:=
\int_{t=0}^\infty \left(t^{-\theta} K( v,t; X_0, X_1)\right)^q\frac{dt}{t}.
\end{equation}
We now choose $X_0 = L^2(\Omega)$ and $X_1 = H^1_0(\Omega)$ and fix $\beta \in (0,1-\sigma)$.
We note that the function $r$ is Lipschitz.
For each $t > 0$ sufficiently small, we may choose $\chi_t \in C^\infty(\R)$ 
such that
$\chi_t\circ r \equiv 0$ on the strip $S_{t/2}$
and
$\chi_t\circ r \equiv 1$ on $\Omega \setminus S_t$ 
as well as
$\|\nabla^{j} (\chi_t\circ r)\|_{L^\infty(\R^2)} \leq C t^{-j}$, $j \in \{0,1\}$.
Decomposing 
$v = (\chi_t\circ r) v +  (1-(\chi_t\circ r)) v$, 
we have $(\chi_t\circ r )v \in H^1_0(\Omega)$ 
and $(1 - (\chi_t\circ r)) v \in L^2(\Omega)$ 
for $v \in H^1_{\beta}(\Omega)$.

A calculation 
shows that there exists a constant $C>0$ such that
\begin{align*}
\forall v\in C^\infty_0(\Omega): \quad 
\|\nabla ((\chi_t\circ r) v)\|_{L^2(\Omega)} 
& \leq C t^{-\beta} \|v\|_{H^1_{\beta}(\Omega)}, 
\\
\|(1 - (\chi_t\circ r)) v\|_{L^2(\Omega)} 
& 
\leq C t^{1-\beta} \|r^{\beta - 1} v\|_{L^2(\Omega)}.
\end{align*}
This implies that
$K(v,t; X_0, X_1) \leq C t^{1- \beta} \|v\|_{H^1_{\beta}(\Omega)}$ for small $t>0$.
Since $X_1 \subset X_0$,
replacing the integration
limit $\infty$ in \eqref{eq:interp-norm}
by a finite number $T$
leads to an equivalent norm \cite[Chap.~6, Sec.~7]{devore93}. Hence,
  \begin{equation*}
    \| v \|_{X_{\sigma, 2}}^2 \simeq \int_{0}^T \left[ t^{-\sigma}K(v, t; X_0, X_1) \right]^2\frac{dt}{t} 
    \leq C \|v\|_{H^1_{\beta}(\Omega)}^2 \int_0^T t^{1-2\beta-2\sigma} dt, 
  \end{equation*}
  and the latter integral is bounded for all $\beta < 1-\sigma$.
  We conclude by remarking that $\tH^\sigma(\Omega) = X_{\sigma, 2}$ with
 equivalent norms \cite[Prop.~{4.1} and Thm.~{4.10}]{Brasco2019}.

The validity of the assertion in the limiting case $(\sigma,\beta) = (1,0)$ 
    follows from \cite[Thm.~{1.4.4.3}]{Grisvard}.
  \qed
\end{proof}
%
\section{Geometrically refined meshes}
\label{sec:mesh}
%
We review here briefly the 
patch-wise construction of geometrically refined meshes
from \cite{banjai-melenk-schwab19-RD,BMS20_2880}.
We admit both triangular
and 
quadrilateral
elements $K\in \calT$,
but \emph{do not} assume shape regularity:
\emph{anisotropic, geometric mesh refinement towards $\partial \Omega$} 
is essential to resolve edge singularities
that are generically present in solutions of fractional PDEs.
%

\subsection{Macro triangulation. Mesh patches}
\label{sec:geo-bdy-layer-mesh}
\begin{figure}
\psfragscanon
\psfrag{T}{$T$}
\psfrag{B}{$B$}
\psfrag{M}{$M$}
\psfrag{C}{$C$}
\begin{overpic}[width=0.25\textwidth]{bdylayer_patch_bdy_marked.eps}
\put(45,70){$ \Tedg$}
\put(100,-5){\small $\tx$}
\put(-7,95){\small $\ty$}
\end{overpic}
\hfill 
\begin{overpic}[width=0.25\textwidth]{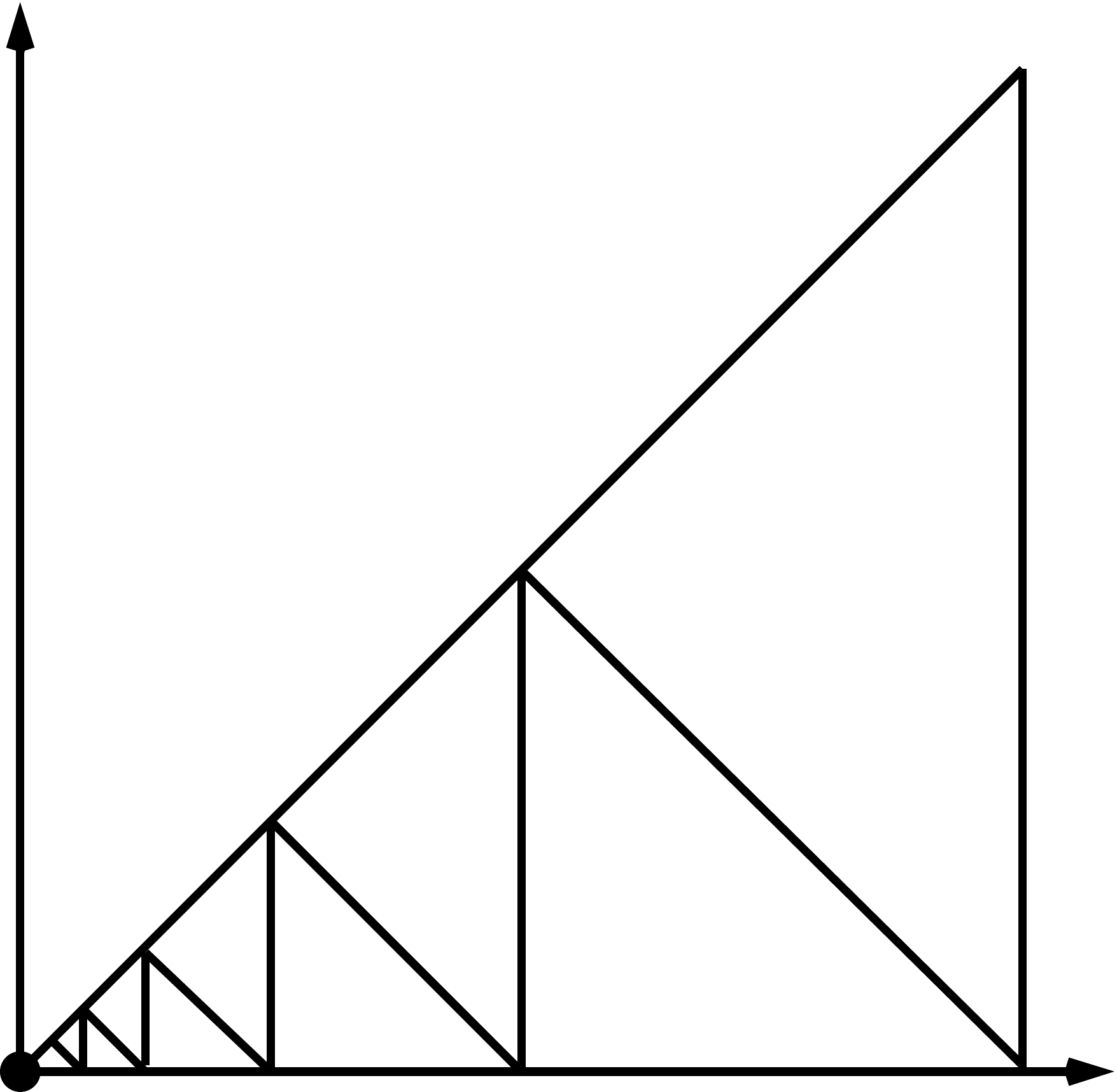}
\put(60,50){$ \check{\calT}^{\Co,L}_{geo,\sigma}$}
\put(100,-5){\small $\tx$}
\put(-7,95){\small $\ty$}
\end{overpic}
\hfill 
\begin{overpic}[ width=0.25\textwidth]{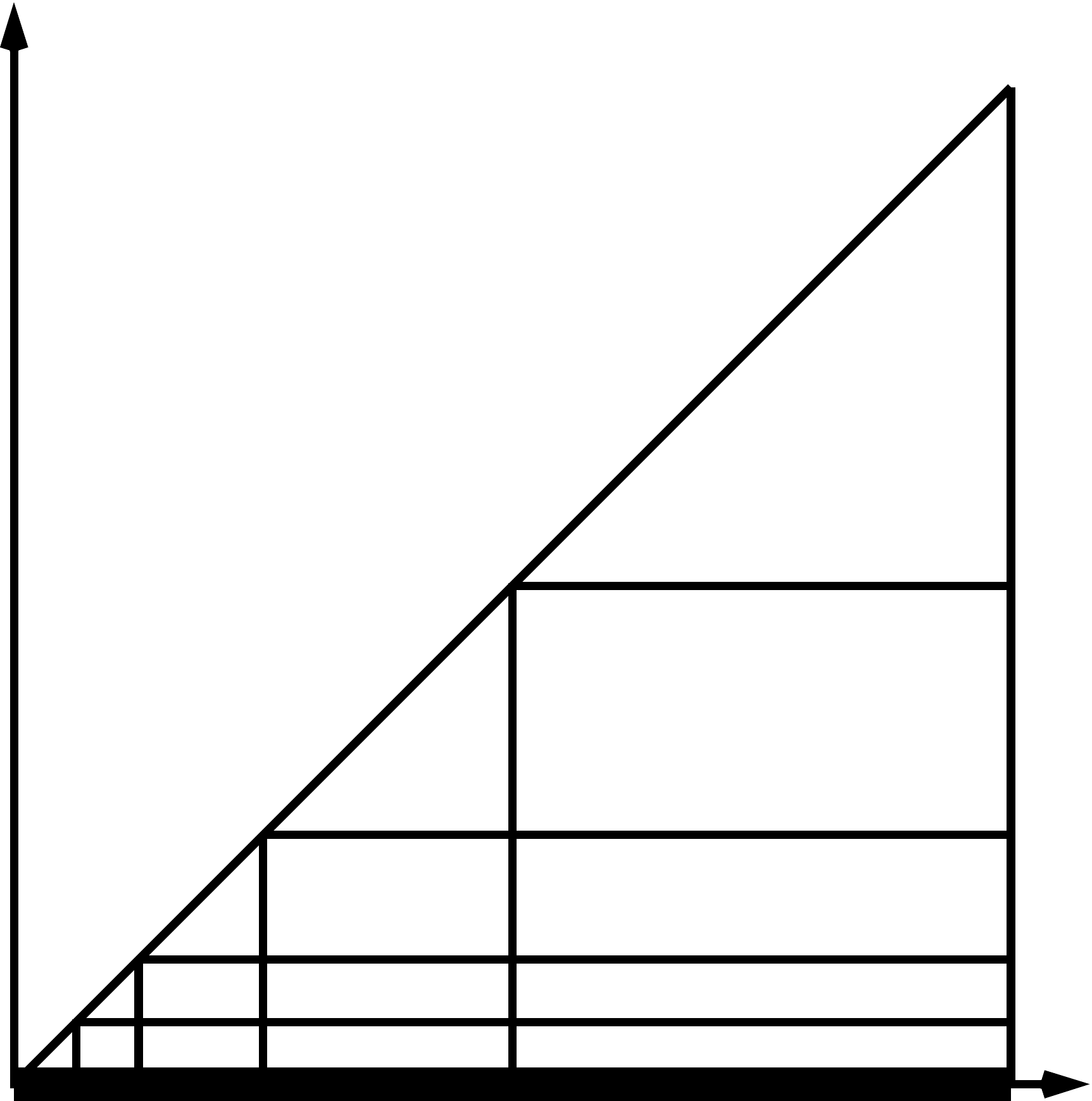}
\put(60,30){$\Tce$}
\put(-7,95){\small $\ty$}
\put(100,-5){\small $\tx$}
\end{overpic}
\\[1.5em]
\centering
\begin{overpic}[width=0.25\textwidth]{trivial_patch.eps}
\put(25,70){trivial patch}
\put(100,-5){\small $\tx$}
\put(-7,95){\small $\ty$}
\end{overpic}
\psfragscanoff
\caption{
\label{fig:patches} 
Catalog ${\mathfrak P}$ of reference refinement patterns.
Top row: reference edge patch $\Tedg$
with $L$ layers of geometric refinement towards $\{\ty=0\}$;
reference vertex patch $\Tcor$
with $L$ layers of geometric refinement towards $(0,0)$;
vertex-edge patch $\Tce$
with $L$ layers of refinement towards $(0,0)$ 
and $L$ layers of refinement towards $\{\ty=0\}$. 
Bottom row: trivial patch.
Geometric entities shown in boldface indicate parts of $\partial \widehat S$
that are mapped to $\partial\Omega$.
These patch meshes are transported into the polygon $\Omega$
via patch maps $F_{\KM}$.
}%
\end{figure}
\begin{figure}
  \centering
  \begin{subfigure}{.5\linewidth}
    \centering
    \includegraphics[width=.6\textwidth]{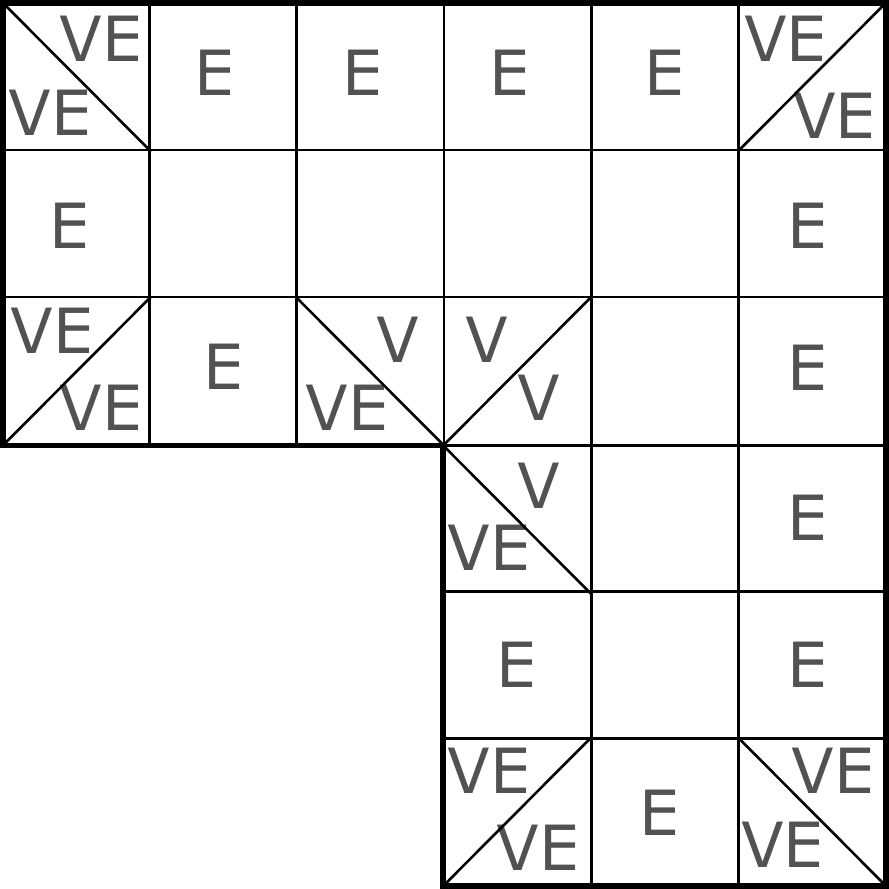}
  \end{subfigure}%
  \begin{subfigure}{.5\linewidth}
    \centering
    \includegraphics[width=.9\textwidth]{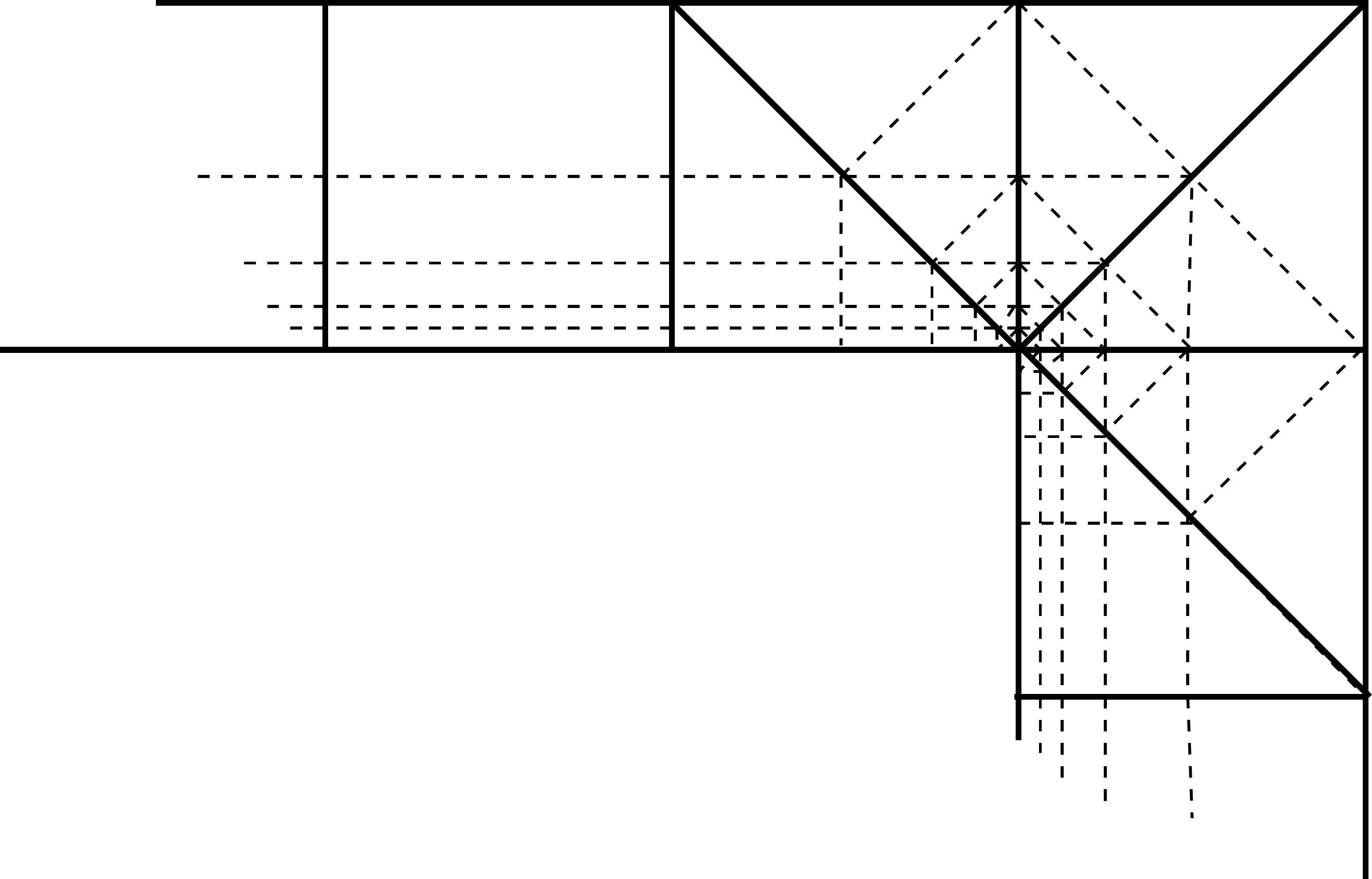}
  \end{subfigure}


\caption{
\label{fig:patch-examples} 
Patch arrangement in $\Omega$.
Left panel: example of L-shaped domain decomposed into patches 
($\Co$, $\mathsf{E}$, $\CE$ indicate Vertex, Edge, Vertex-Edge patches,
empty squares signify trivial patches). 
Right panel: Zoom-in near the reentrant corner $\mathbf{v}$. 
Solid lines indicate patch boundaries, dashed lines mesh lines.
}
\end{figure}
We recapitulate the $hp$-FE approximation theory 
on geometrically refined meshes generated 
as push-forwards of a small number of so-called \emph{mesh patches}, 
similar to those introduced (for the $hp$-approximation of singularly perturbed,
linear elliptic boundary value problems) in \cite[Sec.~{3.3.3}]{melenk02} and \cite{FstmnMM_hpBalNrm2017}.
These mesh families are based on 
a \emph{fixed macro-triangulation}
${\mathcal T}^{\M}$ 
of the domain $\Omega$.
The macro-triangulation ${\mathcal T}^{\M}$ 
consists of mapped triangles and quadrilaterals $\KM$ 
which are endowed with \emph{patch maps} 
(to be distinguished from the actual element maps) 
$F_{\KM}:\hS \rightarrow \KM$, for quadrilateral patches,
and 
$F_{\KM}:\hT \rightarrow \KM$, for triangular patches,
that satisfy the usual compatibility conditions\footnote{
${\mathcal T}^{\M}$ does not have hanging nodes and, 
for any two distinct elements $\KM_1, \KM_2 \in {\mathcal T}^{\M}$ that
share an edge $e$, their respective element maps induce compatible parametrizations
of $e$ (cf., e.g.,  \cite[Def.~{2.4.1}]{melenk02} for the precise conditions).
}.
Each element of the fixed macro-triangulation ${\mathcal T}^{\M}$
is further subdivided according to one of the 
refinement patterns in Definition~\ref{def:admissible-patterns} below
(see also \cite[Sec.~{3.3.3}]{melenk02} or \cite{FstmnMM_hpBalNrm2017}). 
The actual triangulation is then obtained by transplanting refinement patterns 
on the reference patch into the physical domain $\Omega$
by means of the patch maps $F_{\KM}$ of the macro-triangulation. 
That is, 
for any element $K\in {\mathcal T}$, at refinement level $L\in \N$, the element map $F^L_K$ is the 
concatenation of an affine map---which realizes the mapping from the 
reference square or triangle to the elements 
in the patch refinement pattern and will be denoted by $A_K^L$---
and the patch map (denoted by $F_{\KM}$), 
i.e., $F^L_K = F_{\KM} \circ A^L_K : \hat K \rightarrow K$. 
We introduce the refinement patterns, see also \cite{Holm2008} and \cite[Def.~{2.1}]{banjai-melenk-schwab19-RD}.

\begin{definition}[Catalog ${\mathfrak P}$ of refinement patterns]
\label{def:admissible-patterns}
Given $\sigma \in (0,1)$, $L \in {\mathbb N}_0$ 
the catalog ${\mathfrak P}$ consists of the following patterns:
\begin{enumerate}
\item
The \emph{trivial patch}: 
The reference square $\widehat S = (0,1)^2$ is not further refined. 
The corresponding triangulation of $\widehat S$ consists of the single element: 
$\check{\mathcal T}^{\mathrm{trivial}} = \{\widehat S\}$. 
\item
The \emph{geometric edge patch $\Tedg$}: 
$\widehat S$ is refined anisotropically towards 
$\{\yh =0\}$ into $L$ elements as depicted
in Fig.~\ref{fig:patches} (top left). 
The mesh 
$\Tedg$ 
is characterized by the nodes $(0,0)$, $(0,\sigma^i)$, $(1,0)$, $(1,\sigma^i)$,
$i=0,\ldots,L$ and the corresponding rectangular elements generated by these nodes.
\item
The \emph{geometric vertex patch $\Tcor$}: 
$\hT$ is refined isotropically towards $(0,0)$ as depicted 
in Fig.~\ref{fig:patches} (top middle). 
The reference geometric vertex patch mesh $\Tcor$
in $\hT$ with geometric refinement towards $(0,0)$ and $L$ layers
is given by triangles determined by the nodes
$(0,0)$, $(\sigma^i,0)$, and
$(\sigma^i,\sigma^i)$, $i=0,\ldots,L$. 
\item
  The \emph{vertex-edge patch $\Tce$}:
  The triangulation, depicted in Fig.~\ref{fig:patches} (top right),
  consists of both anisotropic elements and isotropic elements.
It is given by the nodes 
$(0,0)$, $(\sigma^i,0)$, $(\sigma^i,\sigma^{j})$, $0 \leq i \leq L$, $i \leq j \leq L$
and consists of anisotropic rectangles and uniformly shape-regular triangles. 
\end{enumerate}
\end{definition}
%
%
\subsection{Geometric boundary-refined mesh $\Tg$}
\label{sec:GeomBdryMes}
We now define the global, boundary-refined meshes $\Tg$,
which will be used in the definition of the FE space \eqref{eq:S^q_0}.
These meshes are built by assembling possibly anisotropic, geometric 
patch partitions from the catalog ${\mathfrak P}$ in Definition~\ref{def:admissible-patterns}.
To ensure inter-patch compatibility, all partitions from ${\mathfrak P}$ are
taken with the same values of $\sigma$ and $L$. The resulting partitions of $\Omega$
are regular, and feature anisotropic, geometric refinement
towards the edges $\mathbf{e} \subset \partial\Omega$
and isotropic geometric refinement towards the vertices $\mathbf{v}\subset \partial\Omega$.
\begin{definition} 
[geometric boundary-refined mesh, \protect{\cite[Def.~{2.3}]{banjai-melenk-schwab19-RD}}]
\label{def:bdylayer-mesh} 
Let ${\mathcal T}^{\M}$ be a fixed macro-triangulation 
consisting of quadrilateral or triangular patches with 
bilinear and affine patch maps.
Patch-refinement patterns are specified in terms of 
parameters $\sigma$ and $L$.


Given $\sigma \in (0,1)$, $L \in {\mathbb N}_0$, 
$\Tg$ is called a \emph{geometric boundary-refined mesh},
if the following conditions hold:
\begin{enumerate}
\item 
$\Tg$ is obtained by refining each element 
$\KM \in {\mathcal T}^{\M}$
according to the finite catalog ${\mathfrak P}$ 
of patch-refinement patterns as
specified in Definition~\ref{def:admissible-patterns}.
\item 
$\Tg$ is a regular partition of $\Omega$.
 i.e., it does not have hanging nodes. 
Since the element maps for the refinement patterns
are assumed to be affine or bilinear, this requirement ensures that the 
resulting triangulation satisfies 
\cite[Def.~{2.4.1}]{melenk02}.  
\end{enumerate}
For each macro-patch $\KM \in {\cT}^{\M}$, 
exactly one of the following cases is possible: 
\begin{enumerate}
\setcounter{enumi}{2}
\item 
\label{item:def-geo-3}
$\overline{\KM} \cap \partial\Omega = \emptyset$. 
Then, the trivial patch is selected as the reference patch. 
We denote $\cT^{\M}_{\mathrm{int}} $ the set of such macro-elements.
\item 
\label{item:def-geo-4} 
$\overline{\KM} \cap \partial\Omega = \{\bP\}$ is a single point,
where $\bP$ can be a vertex of $\Omega$ or a point on the boundary. 
The refinement pattern is the
vertex patch $\Tcor$ with 
$L$ layers of geometric mesh refinement towards the origin $\bO$;
it is assumed that $F_{\KM}(\bO)  = \bP \in \partial\Omega$. 
We denote $\cT^{\M}_{\Co} $ the set of such macro-elements.
\item 
\label{item:def-geo-5}
$\overline{\KM} \cap \partial \Omega = \overline{e}$ for an edge $e$ of 
$\KM$ and neither  endpoint of $e$ is a vertex of $\Omega$. Then,
the refinement pattern is the edge patch 
$\Tedg$ and additionally 
$F_{\KM}(\{\ty = 0\}) \subset \partial\Omega$. 
We denote $\cT^{\M}_{\Edg} $ the set of such macro-elements.
\item 
\label{item:def-geo-6}
$\overline{\KM} \cap \partial \Omega = \overline{\mathbf{e}}$ for an edge $\mathbf{e}$ of 
$\KM$ and exactly one endpoint of $\mathbf{e}$ is a vertex $\mathbf{v}$ of $\Omega$. 
The refinement pattern is the vertex-edge patch
$\Tce$ and additionally 
$F_{\KM}(\{\ty = 0\}) \subset \partial\Omega$ 
as well as
$F_{\KM}(\bO) = \mathbf{v}$. 
We denote $\cT^{\M}_{\CE} $ the set of such macro-elements.
\end{enumerate}
\end{definition}
We assume that $F_{\KM}$ is bilinear for all $\KM \in \cT^\M_{\mathrm{int}}$,
 and that it is affine for all $\KM\in \cT^\M_{\Edg}\cup \cT^\M_{\Co}\cup \cT^\M_{\CE}$.

%
\begin{example}
Fig.~\ref{fig:patch-examples} 
shows a so-called ``$L$-shaped domain'' with macro triangulation and 
patch-refinement patterns in the vicinity of a re-entrant corner $\mathbf{v}$.
\eremk
\end{example}
%
%
\section{$hp$-Approximation on geometric boundary-refined meshes}
\label{sec:hp-approx}
The exponential convergence of $hp$ approximations 
for functions $u\in \tH^s(\Omega)$ that satisfy 
the weighted analytic regularity
\eqref{eq:analytic-u-c-all}--\eqref{eq:analytic-u-int}
will be developed in several steps. 
As is customary in proofs of FE error bounds,
we shall obtain exponential convergence from 
the quasioptimality \eqref{eq:QuasiOpt} by constructing
$v_N = \Pi_N u$ in a subspace $V_N\subset \tH^s(\Omega)$
which is designed to exploit 
\eqref{eq:analytic-u-c-all}--\eqref{eq:analytic-u-int}.
Specifically, we shall use an $hp$-patch framework
similar to the one developed in \cite{banjai-melenk-schwab19-RD,BMS20_2880}
for exponentially convergent approximations of solutions to
singular perturbation problems and of spectral fractional 
diffusion in $\Omega$. 
We recapitulate in Section~\ref{S:hpFEMOmega}
this $hp$-approximation framework.

\subsection{$hp$-FE Spaces in $\Omega$}
\label{S:hpFEMOmega}
On the geometric partitions $\Tg$ introduced in Section
\ref{sec:mesh},
we consider Lagrangian finite elements of uniform polynomial degree $q\geq 1$, 
i.e.,
we choose the global finite element space $\VLq$ in \eqref{eq:QuasiOpt} as 
\begin{equation}
\label{eq:S^q_0}
\VLq\coloneqq \mathcal{S}^q_0(\Omega,\Tg)
\coloneqq 
\left \{ v \in C(\overline{\Omega}): v|_{K}\circ F^L_K \in \mathbb{V}_{q}(\widehat K) 
\quad 
\forall K \in \Tg, \ v|_{\partial \Omega} = 0 \right \}.
\end{equation}
Here,
for $q \geq 1$, the local
polynomial space is
\begin{equation*}
  \mathbb{V}_q(\widehat K) =
  \begin{cases}
    {\mathbb P}_q &\text{if } \widehat K = \widehat T, \\
    {\mathbb Q}_q &\text{if } \widehat K = \widehat S.
  \end{cases}
\end{equation*}

%
\subsection{Definition of the $hp$-interpolation operator  $\Pi_q^L$}
\label{S:DefhpInt}
The global $hp$-interpolator 
$\PiLq: H^1_\beta(\Omega) \to \VLq$ will be obtained 
by assembling local Gauss-Lobatto-Legendre (GLL) 
interpolants in the reference patches.
The global error estimate will follow from 
addition of patchwise error bounds in $H^1_\beta$ 
in the reference patches.
Addition of element-wise and patch-wise error bounds 
is possible due to the locality of the $H^1_\beta$-norm.
In possibly anisotropic quadrilateral elements,
the GLL interpolants are generated by
tensorization of univariate GLL interpolants.
We review their definition and properties 
briefly in Appendix~\ref{sec:approx-reference-element}.
Recall that all triangular elements are shape-regular. 
Only quadrilateral elements may be anisotropic. 
\subsubsection{Definition of the $hp$-interpolator $\widetilde \Pi^L_q$ on reference patches}
\label{sec:approx-reference-patches}
The $hp$-approximation operators on reference patches 
are obtained by assembling elementwise 
GLL-interpolants (cf. \cite[Eqn.(3.6)]{banjai-melenk-schwab19-RD}).
Recalling $A^L_\Kt: \widehat K \rightarrow \Kt = A^L_\Kt(\widehat K) \in \Tgeneric\cup\Ttrivial$ with $\bullet\in \{ \Co,\Edg,\CE \}$
the \emph{affine bijection between the reference element $\widehat K$
and the corresponding element $\Kt$ on the reference patch}, we set 
\begin{equation}
\label{eq:tildePiq} 
(\widetilde \Pi^L_q)|_{\Kt} v:= 
\begin{cases} 
\widehat\Pi_q^\triangle (v \circ A^L_\Kt) & \text{ if $\Kt$ is a triangle,} \\
\widehat\Pi_q^{\Box} (v \circ A^L_\Kt) & \text{ if $\Kt$ is a rectangle}. 
\end{cases} 
\end{equation}
The elemental GLL interpolators $\widehat \Pi_q^\triangle$ and $\widehat \Pi_q^{\Box}$ 
defined in Lemmata~\ref{lemma:hat_Pi_infty}, \ref{lemma:hat_Pi_1_infty}
commute with trace operators on the edges of $\Kh$.
This ensures global $H^1_\beta$-conformity of the 
reference patch interpolator $\widetilde \Pi^L_q$.
\subsubsection{Definition of the global $hp$-interpolator $\Pi_q^L$}
\label{sec:DefGlobhp}
With the $hp$ patch-interpolants in \eqref{eq:tildePiq} in place, 
the global $hp$-interpolator $\Pi^L_q$
is assembled from elementwise projectors on an element $K$ 
via
\begin{align*}
(\Pi^L_q u)|_K \circ F^L_K & :=
\begin{cases}
\PiT (u \circ F^L_K) & \mbox{ if $K$ is a triangle}, \\
\PiGL (u \circ F^L_K) & \mbox{ if $K$ is a rectangle},
\end{cases}
\end{align*}
where 
$\PiT$ is defined in Lemma~\ref{lemma:hat_Pi_infty} 
and 
$\PiGL$ in Lemma~\ref{lemma:hat_Pi_1_infty}. 
Since $\PiT$ and $\PiGL$ reduce to the Gauss-Lobatto interpolation operator
on the edges of the reference element,
the operator $\Pi^L_q$ indeed maps into $S^q_0(\Omega,\Tg)$.
We recall that the element maps $F_K^L$ have the form
$$
F^L_K = F_{\KM} \circ A^L_K,
$$
where
$A^L_K:\Kh \rightarrow \Kt:= A^L_K(\Kh) =
F_{\KM}^{-1}(K)\in\Tgeneric\cup \Ttrivial$ is an affine bijection,
and 
$ F_{\KM} $ is the patch map.

Furthermore, 
$\widehat u$ denotes the pull-back of $u$ to the reference element, 
i.e.,
\begin{equation}\label{eq:hatu}
\widehat u := u|_K \circ F^L_K
\end{equation}
whereas
\begin{equation}\label{eq:tildu}
\widetilde u:= (u \circ F_{\KM})|_{\Kt} = \widehat u \circ (A^L_K)^{-1}
\end{equation}
is the corresponding function on $\Kt$.
With the patch-interpolant $\widetilde \Pi^L_q$ from (\ref{eq:tildePiq}), 
we obtain on a macro-element $\KM\in {\mathcal T}^{\M}$
\begin{equation} \label{eq:GlobMacPi}
(\Pi^L_q u)\circ F_{\KM} = \widetilde \Pi^L_q \ut.
\end{equation}
For $k \in {\mathbb N}_0$, we have for all elements $K \subset \KM$
with $\Kt = F_{\KM}^{-1}(K)$
\begin{subequations}
\label{eq:norm-equivalence}
\begin{align}
\forall v \in H^k(K)\colon\;\;
\|v \circ F_{\KM} \|_{H^k(\Kt)} &\sim \|v\|_{H^k(K)} , \\
\forall v \in W^{k,\infty}(K) \colon\;\;
\|v \circ F_{\KM} \|_{W^{k,\infty}(\Kt)} &\sim \|v\|_{W^{k,\infty}(K)},
\end{align}
\end{subequations}
where in both cases the constants implied in $\sim$ depend solely 
on $k$, the patch maps $F_{\KM}$ and the macro-element $\KM$. 

The equivalences (\ref{eq:norm-equivalence}) show that
the approximation error $v - \Pi^L_q v$ on $K$ is equivalent to the corresponding
error $\widetilde{v} - \widetilde\Pi^L_q \widetilde{v}$ on $\Kt$.
\subsection{Mesh layers and cutoff function}
\label{sec:LayCut}
For $L\in \mathbb{N}$, we subdivide the mesh $\Tg$ into
boundary layer $\lay_0$, 
transition layer $\lay_1$, and 
internal mesh elements $\lay_{\mathrm{int}}$. 
Specifically, we let
\begin{align*}
  \lay_0 &:= \left\{ K\in \Tg: \overline{K}\cap \partial\Omega \neq \emptyset \right\},\\
  \lay_1 &:= \left\{ K\in \Tg\setminus \lay_0: \exists\, J\in \lay_0 \text{ such that }
  \overline{K}\cap \overline{J} \neq \emptyset \right\},\\
  \lay_{\mathrm{int}} &:= \Tg \setminus\left( \lay_0\cup\lay_1 \right).
\end{align*}
Furthermore, we introduce the continuous, 
piecewise linear, cutoff function $\cutoff:\Omega \to [0,1]$ satisfying
\begin{equation}
  \label{eq:cutoff-prop}
\cutoff \in \mathcal{S}^1_0(\Omega, \Tg),
\qquad
\cutoff \equiv 0\text{ on all }K\in \lay_0,
\qquad
\cutoff \equiv 1\text{ on all }K\in \lay_{\mathrm{int}}.
\end{equation}
Finally,  the subdomain comprising the union of all mesh elements touching the boundary is
\begin{equation}
  \label{eq:SLzero}
\Omega^L_0 = \bigcup_{K\in \lay_0} \overline{K}\;.
\end{equation}
%
\subsection{Exponential convergence of the $hp$ approximation}
\label{sec:ExpCnvhp}
We aim to construct an approximation $v\in \VLq$, with $\VLq$ as
defined in \eqref{eq:S^q_0}, to the weak solution $u$ of the 
fractional PDE  \eqref{eq:FracLap}
that converges exponentially in the $\tH^s(\Omega)$-norm. 
By Proposition \ref{prop:LocNrm}, we fix 
$\beta \in [0, 1-s)$ and apply the triangle inequality 
to obtain
\begin{equation}
    \label{eq:error-decomp}
    \begin{aligned}
      \inf_{v\in \VLq}\| u - v \|_{\tH^s(\Omega)} &\leq \inf_{v\in \VLq}
\| \cutoff u - v \|_{\tH^s(\Omega)}  + \| (1-\cutoff) u \|_{\tH^s(\Omega)}
\\
&\leq C_{\beta, s}
\| \cutoff (u - \Pi^L_{q-1}) u\|_{H^1_\beta(\Omega)}  + \| (1-\cutoff) u \|_{\tH^s(\Omega)},
    \end{aligned}
\end{equation}
where we have used $\cutoff\in \mathcal{S}^1_0(\Omega, \Tg)$ 
so that $\cutoff \Pi^L_{q-1} u \in \VLq$ for $q \geq 2$.
In the next section, 
we estimate the second term in the right-hand side of the above inequality.
Then, in the following sections, we proceed with an estimate of the first term in the
right-hand side of \eqref{eq:error-decomp}.
We will consider separately the
reference vertex (Sec.~\ref{sec:R-c-patch}), edge (Sec.~\ref{sec:R-e-patch}),
and vertex-edge (Sec.~\ref{sec:R-ce-patch}) patches.
Finally, in Section~\ref{sec:GlobErrBd} we bring all estimates together in 
$\Omega$.
\subsubsection{Estimate of the term $(1-\cutoff) u$}
\label{sec:EstuCutoff}
The following statement is an estimate of the $\tH^s(\Omega)$-norm of the term
$u-\cutoff u$. 
\begin{lemma}
  \label{lemma:u-gu}
  Let $u$ be the solution to \eqref{eq:weakform} for $s\in (0,1)$.
  Let $L\in \mathbb{N}$ and $\cutoff$ be
defined as in \eqref{eq:cutoff-prop}. 
Then, there exist $C, b>0$ independent of $L$ such that 
\begin{equation}
  \label{eq:u-gu}
  \| u - \cutoff u \|_{\tH^s(\Omega)} \leq C\exp(-bL).
\end{equation}
\end{lemma}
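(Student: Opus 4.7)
The plan is to bound $\|u - \cutoff u\|_{\tH^s(\Omega)}$ by first applying Proposition~\ref{prop:LocNrm} to replace the nonlocal norm by a weighted integer-order norm, and then exploiting both the weighted analytic regularity of Theorem~\ref{thm:mainresult} and the fact that $1 - \cutoff$ vanishes outside a strip of width $\lesssim \sigma^L$ about $\partial\Omega$. To this end, I would fix a small $\varepsilon > 0$ and a weight $\beta \in [\max\{0,\, 1/2 - s + \varepsilon\},\, 1-s)$; this interval is nonempty for every $s \in (0,1)$ once $\varepsilon$ is small. Proposition~\ref{prop:LocNrm} then yields
\[
\|u - \cutoff u\|_{\tH^s(\Omega)} \leq C_{\beta,s}\,\|(1 - \cutoff)u\|_{H^1_\beta(\Omega)},
\]
so it suffices to bound the weighted $H^1_\beta$-norm. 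Since $\cutoff \equiv 1$ on $\lay_{\mathrm{int}}$, both $(1-\cutoff)u$ and its gradient are supported in $\Omega^L_* := \Omega^L_0 \cup \bigcup_{K \in \lay_1}\overline{K}$, and by the refinement patterns of Definition~\ref{def:bdylayer-mesh} this set is contained in $\{x \in \Omega : r(x) \leq C\sigma^L\}$.

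Next, I would decompose $\Omega^L_*$ into its intersections with the vertex, edge, and vertex-edge neighborhoods $\omegac$, $\omegae$, $\omegace$ of Section~\ref{sec:Reg}, the interior piece being empty for $L$ large enough. On a vertex neighborhood $r \sim r_{\mathbf{v}}$; Theorem~\ref{thm:mainresult}(i), applied with $\alpha = 0$ and with $|\alpha| = 1$, furnishes finite bounds on $\|r_{\mathbf{v}}^{-1/2-s+\varepsilon} u\|_{L^2(\omegac)}$ and $\|r_{\mathbf{v}}^{1/2-s+\varepsilon}\nabla u\|_{L^2(\omegac)}$. The key manipulation is the factoring
\[
r^{2\beta - 2}|u|^2 = r^{\,2\beta - 1 + 2s - 2\varepsilon}\bigl(r^{-1-2s+2\varepsilon}|u|^2\bigr),\quad r^{2\beta}|\nabla u|^2 = r^{\,2\beta - 1 + 2s - 2\varepsilon}\bigl(r^{\,1-2s+2\varepsilon}|\nabla u|^2\bigr),
\]
so that pulling out the supremum of $r^{2\beta-1+2s-2\varepsilon}$ over $\Omega^L_*$ produces a factor $(C\sigma^L)^{\kappa}$ with $\kappa := 2\beta - 1 + 2s - 2\varepsilon > 0$ by our choice of $\beta$. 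Parts (ii) of Theorem~\ref{thm:mainresult} supply the analogous finite integrals on $\omegae$ and $\omegace$.

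For the additional term $u\,\nabla \cutoff$ in $\nabla((1 - \cutoff)u)$, I would use that $|\nabla \cutoff| \lesssim \sigma^{-L}$ on $\lay_1$ (by the anisotropic grading of the refinement patterns, where the thinnest element dimension is of order $\sigma^L$) combined with the same weight-factoring trick applied to the unweighted $L^2$-integral of $u$ on the strip; the product $r^{2\beta}|\nabla\cutoff|^2 \lesssim \sigma^{2L(\beta-1)}$ times the $L^2$-factor $\sigma^{L(1+2s-2\varepsilon)}$ again gives $\sigma^{L\kappa}$. Summing over the finitely many vertex/edge neighborhoods yields $\|(1-\cutoff)u\|_{H^1_\beta(\Omega)}^2 \leq C\sigma^{L\kappa}$, and \eqref{eq:u-gu} follows with $b = -\tfrac12 \kappa \log \sigma > 0$. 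The main obstacle I foresee is the tight window for $\beta$: it must simultaneously satisfy $\beta < 1-s$ (for the embedding) and $\beta > 1/2 - s + \varepsilon$ (for the geometric factor to be contractive); a secondary technical point is to verify that $(1-\cutoff)u$ lies in the closure $H^1_\beta(\Omega)$ of $C_0^\infty(\Omega)$, which follows by a standard truncation-and-mollification argument based on the analytic regularity of $u$ away from $\partial\Omega$.
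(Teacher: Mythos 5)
Your proposal follows essentially the same route as the paper: both reduce to the weighted norm $\|(1-\cutoff)u\|_{H^1_\beta(\Omega)}$ via Proposition~\ref{prop:LocNrm}, both exploit that the support of $(1-\cutoff)u$ lies in a strip $S_{c\sigma^L}$, both decompose that strip into the vertex/edge/vertex-edge neighborhoods from Theorem~\ref{thm:mainresult}, and both pull out a geometric factor $(\sigma^L)^{\kappa}$ with exponent proportional to $\beta-1/2+s-\epsilon$ from the weighted analytic regularity bounds (your inlined treatment of the $u\nabla\cutoff$ term is what the paper outsources to Lemma~\ref{lemma:1-cutoff}). One small slip: with $\beta\geq 1/2-s+\varepsilon$ you only get $\kappa\geq 0$; you need $\beta$ strictly larger than $1/2-s+\varepsilon$ (or, as the paper does, fix $\beta\in(1/2-s,1-s)$ first and then shrink $\varepsilon$) so that $\kappa>0$.
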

\begin{proof}
We fix $\beta \in [0,1)$ additionally satisfying $\beta\in (1/2-s, 1-s)$ 
and estimate the $H^1_\beta(\Omega)$-norm of $u-\cutoff u$.
From Lemma \ref{lemma:1-cutoff} it follows that there exist constants $c,C>0$ independent
of $L$ such that
\begin{equation*}
  \| (1-\cutoff) u \|_{H^1_\beta(\Omega)} \leq C \| u \|_{H^1_\beta(S_{c\sigma ^L})}.
\end{equation*}
We now decompose $S_{c\sigma^L} $ into its components belonging to vertex, edge,
vertex-edge, and internal neighborhoods:
\begin{equation*}
  S_{c\sigma^L} = \bigcup_{\mathbf{v}\in \mathcal{V}} \left((\omegac \cap S_{c\sigma^L})\cup \bigcup_{\mathbf{e}\in \mathcal{E_\mathbf{v}}} (\omegace \cap S_{c\sigma^L}) \right) \cup \bigcup_{\mathbf{e}\in \mathcal{E}}(\omegae\cap S_{c\sigma^L}) \cup (\Omega_\mathrm{int} \cap S_{c\sigma^L}).
\end{equation*}
We start with vertex neighborhoods $\omegac$:
Since $\beta > 1/2-s$, we may choose $\varepsilon$ sufficiently small such that
$\beta-1/2+s-\varepsilon > 0$. 
For any $\mathbf{v}\in \mathcal{V}$, we obtain using the weighted regularity estimate \eqref{eq:analytic-u-c-all} for $p=0,1$
\begin{align*}
  \|u \|_{H^1_\beta(\omegac\cap S_{c\sigma ^L})} 
  &
      \lesssim \|r_{\mathbf{v}}^{-1/2-s+\epsilon+\beta-1+1/2+s-\epsilon} u\|_{L^2(\omegac\cap S_{c\sigma^L})} \\
  & \qquad
    + \|r_{\mathbf{v}}^{1/2-s+\epsilon+\beta-1/2+s-\epsilon} \nabla u \|_{L^2(\omegac\cap S_{c\sigma ^L})} 
    \\
  &\leq (c\sigma^L)^{\beta-1/2+s-\epsilon}\| u \|_{H^1_{1/2-s+\epsilon}(\omegac\cap S_{c\sigma ^L})} \\
  &\stackrel{\eqref{eq:analytic-u-c-all}}{\lesssim} (c\sigma^L)^{\beta-1/2+s-\epsilon}.
\end{align*}
We next estimate the $H^1_\beta$-norm of the interpolation error 
on edge neighborhoods $\omegae$: 
for any $\mathbf{e}\in \mathcal{E}$, we use the weighted regularity \eqref{eq:analytic-u-e-all} 
with $\ppar = 0$ and $\pperp = 0,1$ to bound
\begin{align*}
 & \|u \|_{H^1_{\beta}(\omegae\cap S_{c\sigma ^L})}
  \\ & \qquad \simeq
  \|r_{\mathbf{e}}^{\beta-1} u \|_{L^2(\omegae\cap S_{c\sigma ^L})} +
  \|r_{\mathbf{e}}^\beta \Dpar u \|_{L^2(\omegae\cap S_{c\sigma ^L})} +
  \|r_{\mathbf{e}}^\beta \Dperp u \|_{L^2(\omegae\cap S_{c\sigma ^L})} \\
   & \qquad \leq
    \|r_{\mathbf{e}}^{-1/2-s+\epsilon+\beta-1+1/2+s-\epsilon} u \|_{L^2(\omegae\cap S_{c\sigma ^L})}
     \\ & \qquad \qquad +
     \|r_{\mathbf{e}}^{-1/2-s+\epsilon+\beta+1/2+s-\epsilon} \Dpar u \|_{L^2(\omegae\cap S_{c\sigma ^L})}
     \\ & \qquad \qquad +
   \|r_{\mathbf{e}}^{1/2-s+\epsilon+\beta-1/2+s-\epsilon} \Dperp u \|_{L^2(\omegae\cap S_{c\sigma ^L})} \\
  &\qquad \leq
  (c\sigma^L)^{\beta+1/2+s-\epsilon}\|r_{\mathbf{e}}^{-1/2-s+\epsilon} \Dpar u \|_{L^2(\omegae\cap S_{c\sigma ^L})} 
    \\ & \qquad \qquad
         + (c\sigma^L)^{\beta-1/2+s-\epsilon}\bigg(\|r_{\mathbf{e}}^{-1/2-s+\epsilon} u \|_{L^2(\omegae\cap S_{c\sigma ^L})}
  \\ & \qquad \qquad
       + \|r_{\mathbf{e}}^{1/2-s+\epsilon} \Dperp u \|_{L^2(\omegae\cap S_{c\sigma ^L})} \bigg)\\
  &\qquad \stackrel{\eqref{eq:analytic-u-e-all}}{\lesssim}  (c\sigma^L)^{\beta-1/2+s-\epsilon}.
\end{align*}
The error on the vertex-edge neighborhood $\omegace$ can be bounded 
for any $\mathbf{v} \in \mathcal{V}$ and any $\mathbf{e}\in \mathcal{E}_{\mathbf{v}}$ 
using $r_{\mathbf{v}}(x)\gtrsim r_{\mathbf{e}}(x)$ for all $x\in \omegace$
as well as the weighted regularity \eqref{eq:analytic-u-ce-all} with $\ppar,\pperp$ satisfying $\ppar + \pperp \leq 1$
\begin{align*}
  & \|u \|_{H^1_\beta(\omegace\cap S_{c\sigma ^L})}
  \\ & \qquad \simeq
  \|r_{\mathbf{e}}^{\beta-1} u \|_{L^2(\omegace\cap S_{c\sigma ^L})} +
  \|r_{\mathbf{e}}^\beta \Dpar u \|_{L^2(\omegace\cap S_{c\sigma ^L})} +
  \|r_{\mathbf{e}}^\beta \Dperp u \|_{L^2(\omegace\cap S_{c\sigma ^L})} 
  \\ & \qquad
      \leq
\|r_{\mathbf{e}}^{-1/2-s+\epsilon+\beta-1+s+1/2-\epsilon}r_{\mathbf{v}}^{\epsilon-\epsilon} u \|_{L^2(\omegace\cap S_{c\sigma ^L})} \\
   & \qquad \qquad 
       + \|r_{\mathbf{e}}^{-1/2-s+\epsilon+\beta+s+1/2-\epsilon}r_{\mathbf{v}}^{1+\epsilon-1-\epsilon} \Dpar u \|_{L^2(\omegace\cap S_{c\sigma ^L})} \\
   & \qquad \qquad + \|r_{\mathbf{e}}^{1/2-s+\epsilon+\beta-1/2+s-\epsilon} r_{\mathbf{v}}^{\epsilon-\epsilon}\Dperp u \|_{L^2(\omegace\cap S_{c\sigma ^L})} 
     \\
  &\qquad \stackrel{\eqref{eq:analytic-u-ce-all}}{\lesssim} (c\sigma^L)^{\beta-1/2+s-2\epsilon},
\end{align*}
where we assumed $\varepsilon$ to be chosen small enough such that $\beta-1/2+s-2\epsilon > 0$.
Finally, as $c,\sigma$ are fixed, we may assume that 
$\Omega_{\mathrm{int}}\cap S_{c\sigma^L} = \emptyset$ 
by replacing $L$ by $L+L_0$ with a fixed $L_0 \in \mathbb{N}$ 
large enough and independent of $L$, 
which only changes the constant $b$ in the exponential estimate.
We have thus obtained that  
\begin{equation*}
  \| (1-\cutoff) u \|_{H^1_\beta(\Omega)} \leq  \| u \|_{H^1_\beta(S_{c\sigma ^L})} \leq C' \exp(-bL).
\end{equation*}
Applying Proposition \ref{prop:LocNrm} concludes the proof.
\qed
\end{proof}
\begin{figure}
  \centering
  \begin{subfigure}[b]{.3\textwidth}
    \centering
   \includegraphics[width=.9\textwidth]{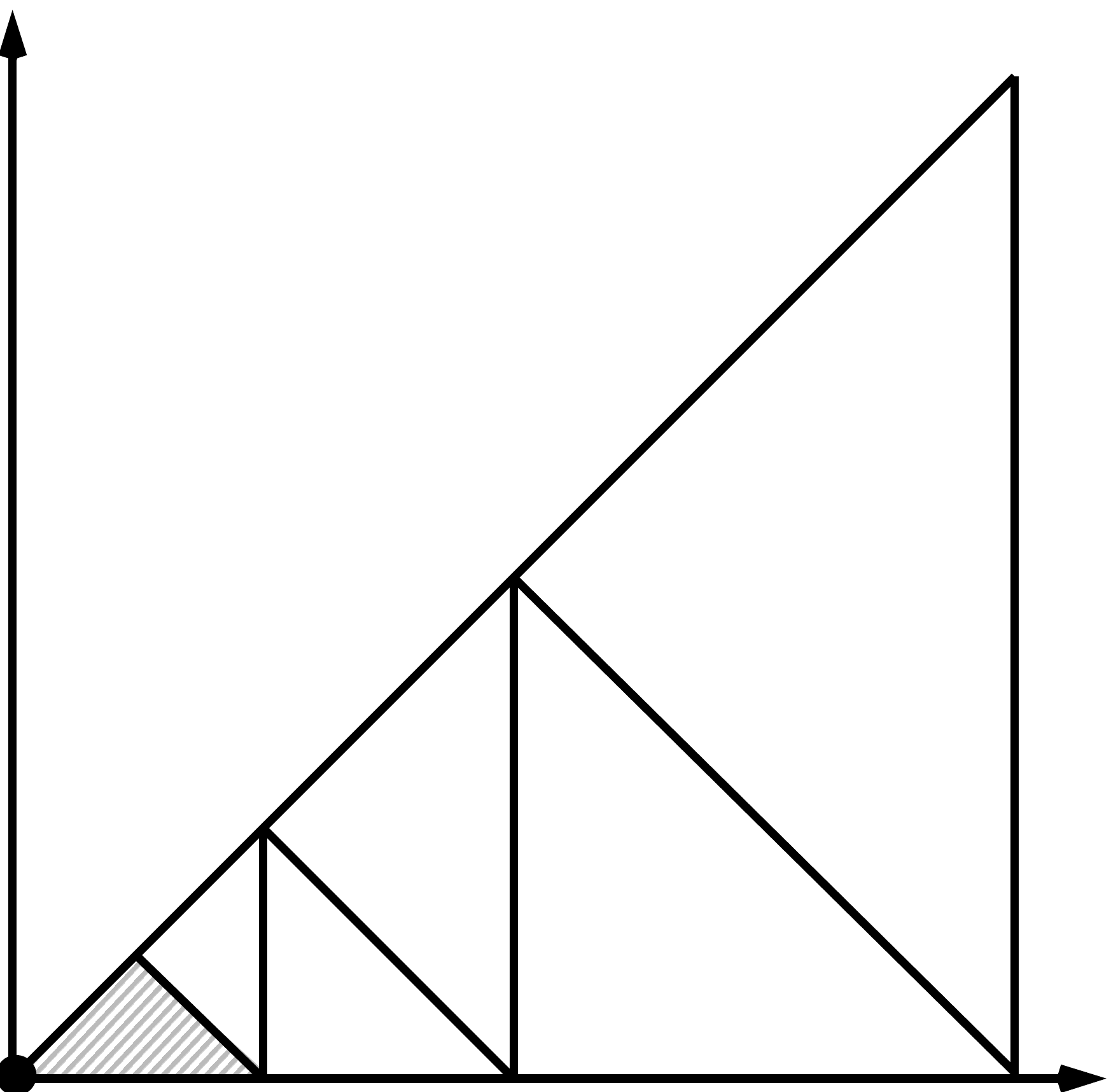}
   \caption{}\label{subfig:L0-corner}
  \end{subfigure}
  \begin{subfigure}[b]{.3\textwidth}
    \centering
   \includegraphics[width=.9\textwidth]{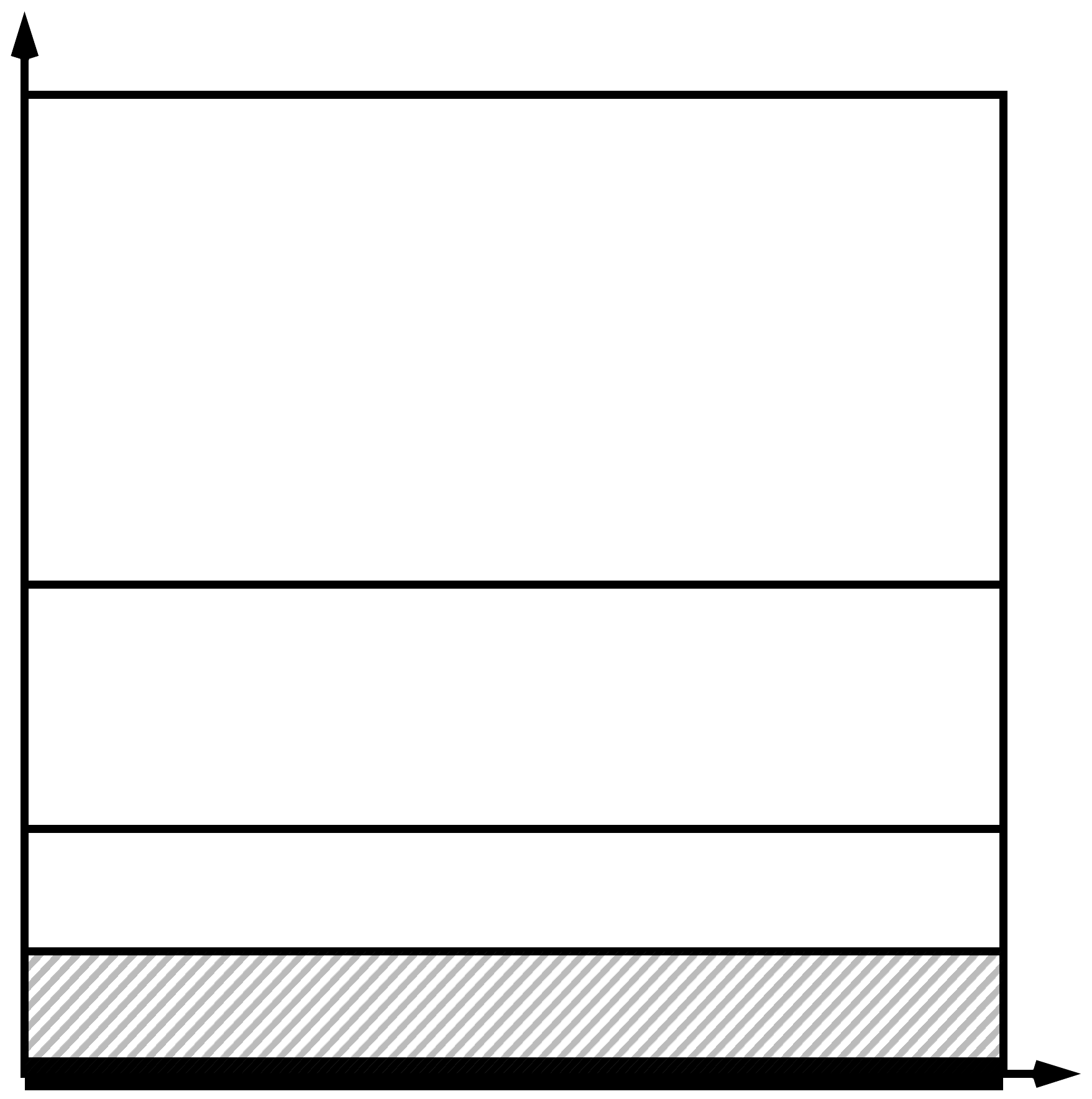}
   \caption{}\label{subfig:L0-edge}
  \end{subfigure}
\begin{subfigure}[b]{.3\textwidth}
    \centering
    \includegraphics[width=.9\textwidth]{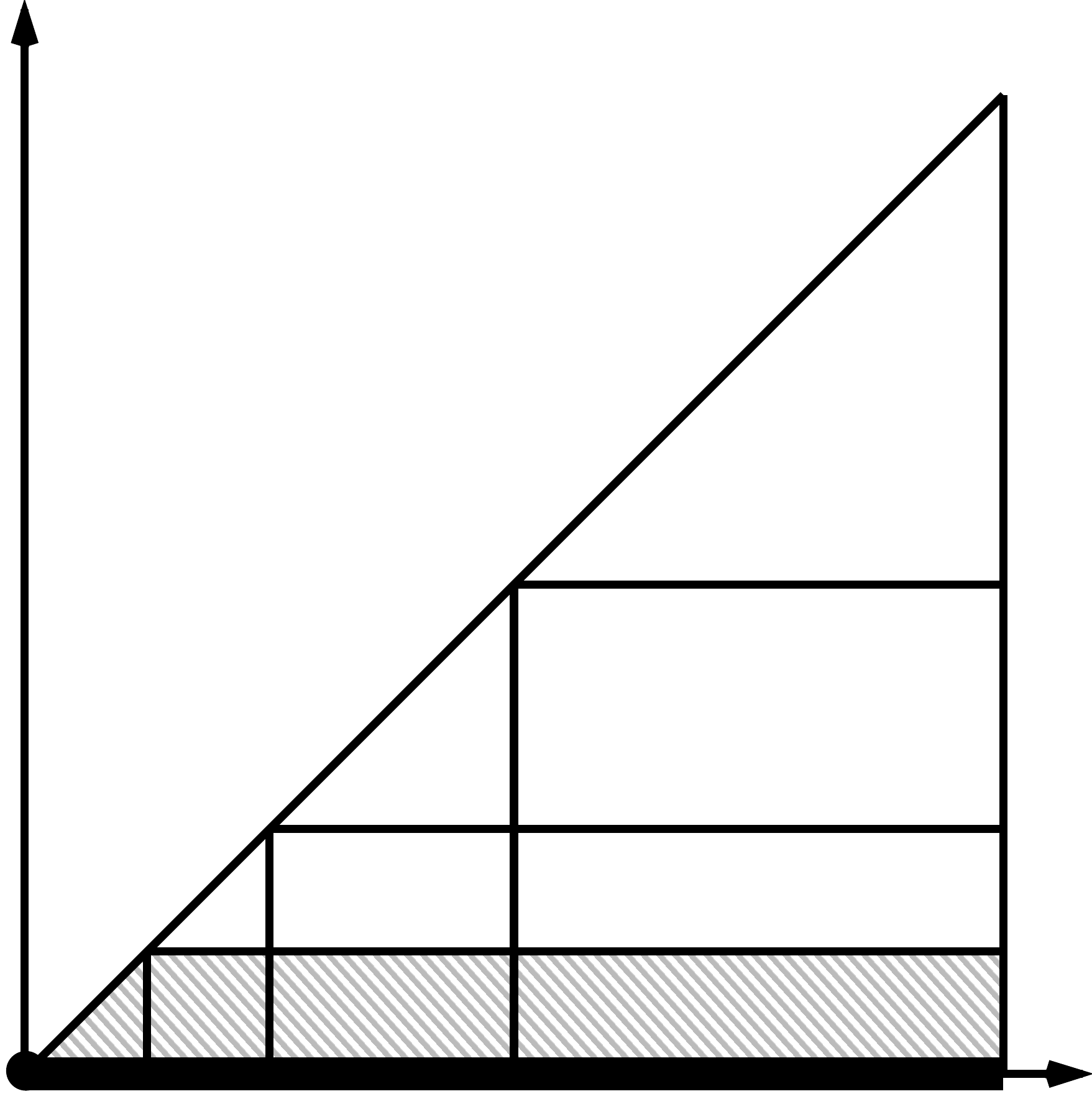}
   \caption{}\label{subfig:L0-ce}
  \end{subfigure}
  \caption{Boundary elements (displayed shaded)
    $\tlayC_0$,
    $\tlayE_0$, and
    $\tlayCE_0$
    in the
    (\subref{subfig:L0-corner}) vertex,
    (\subref{subfig:L0-edge}) edge,
    and
    (\subref{subfig:L0-ce}) vertex-edge reference patches.
  }
  \label{figL0-refs}
\end{figure}
\subsubsection{$hp$-FE approximation in reference vertex patch 
               $\Tcor$}
\label{sec:R-c-patch}
%
%
We denote $\mathbf{v}= (0,0)$ and
$ \trv = \dist(\mathbf{v}, \cdot)$. 
Furthermore, let 
\begin{equation*}
  \tlayC_0 = \{K\in \Tcor : \overline{K}\cap \mathbf{v}\neq \emptyset\}, \qquad
 \tTCint = \hT\setminus \bigcup_{K\in \tlayC_0} \overline{K},
\end{equation*}
be, respectively, the elements abutting the singular vertex and the interior part of the vertex reference patch, 
see Fig.~\ref{subfig:L0-corner}.
\begin{lemma}[$hp$-FE approximation in reference vertex patch $\Tcor$]
\label{lemma:R-corner-patch}
For fixed $s\in(0,1)$ and $\gamma > 0$, 
let $\tu$ satisfy the following: for all $\epsilon>0$ there exist a constant $C_\epsilon>0$ such that 
for all $\alpha\in \bbN^2_0$ it holds, with $\alpham=p$, that
\begin{equation}\label{eq:R-analytic-u-c}
\norm{\tilde{r}_{\mathbf{v}}^{p-1/2-s+\varepsilon} \partial^\alpha \tu }_{L^2(\hT)}
\leq 
C_{\varepsilon} \gamma^{p+1}p^p.
\end{equation}
Then,  for all $\beta > 1/2-s$ and all $0 < \epsilon < \beta + s - 1/2$,
there exist constants $b_{\Co} > 0$ (depending only on $\gamma$, $\beta$, $s$,
$\sigma$) and $C_{\Co}>0$ (depending additionally on $\epsilon$)
such that for every $L$, $q\in \bbN$
\begin{equation}\label{eq:ExCnvC}
\| \trv^{\beta-1}(\tu - \tPi^L_{q}\tu )\|_{L^2(\tTCint)}
+
\| \trv^{\beta}\nabla(\tu - \tPi^L_{q}\tu )\|_{L^2(\tTCint)}
\leq 
C_{\Co} C_\varepsilon\exp(-b_{\Co} q) .
\end{equation}
\end{lemma}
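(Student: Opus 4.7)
The plan is to reduce the estimate on $\tTCint$ to an element-by-element analysis that exploits the self-similar structure of the refinement $\Tcor$. I decompose $\tTCint$ into its constituent triangles $K \in \Tcor \setminus \tlayC_0$, grouping them by ``layer'' index $i = 0, 1, \ldots, L-1$, where layer $i$ collects those triangles in the annular region between radii $\sigma^{i+1}$ and $\sigma^i$ from $\mathbf{v} = (0,0)$. By the catalog construction, each such triangle is an affine image of one of a bounded number of fixed reference triangles, uniformly shape-regular in $i$, with diameter $h_K \sim \sigma^i$; moreover $\trv \sim \sigma^i$ throughout $K$, and the cardinality of each layer is bounded independently of $L$.

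On a triangle $K$ in layer $i$, let $\Phi_K\colon \hT \to K$ denote the affine element map (with $|\det D\Phi_K| \sim \sigma^{2i}$) and set $\hu \coloneqq \tu \circ \Phi_K$. A standard affine transformation, combined with $\trv \sim \sigma^i$ on $K$, converts the hypothesis \eqref{eq:R-analytic-u-c} into an analytic derivative estimate on the reference element: for every multi-index $\alpha$ with $|\alpha| = p$,
\begin{equation*}
\| \partial_{\hat x}^\alpha \hu \|_{L^2(\hT)} \lesssim h_K^{p-1}\| \partial^\alpha \tu \|_{L^2(K)} \lesssim \sigma^{i(p-1)} \sigma^{-i(p-1/2-s+\varepsilon)} C_\varepsilon \gamma^{p+1} p^p = C_\varepsilon \sigma^{i(s-1/2-\varepsilon)} \gamma^{p+1} p^p,
\end{equation*}
i.e., an analytic bound on $\hT$ whose Gevrey constant $\gamma$ is independent of $i$. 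Feeding this into the elemental GLL approximation result on $\hT$ (Lemma~\ref{lemma:hat_Pi_infty} in the appendix), and optimizing in the differentiation order $k \sim q / (e\gamma)$ via Stirling, yields, for some $b > 0$ depending only on $\gamma$,
\begin{equation*}
\| \hu - \PiT \hu \|_{L^2(\hT)} + \| \hu - \PiT \hu \|_{H^1(\hT)} \leq C C_\varepsilon \sigma^{i(s-1/2-\varepsilon)} e^{-bq}.
\end{equation*}

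Unfolding the affine scaling back to $K$, using the elementwise definition \eqref{eq:tildePiq} of $\tPi^L_q$, and inserting the weight $\trv \sim \sigma^i$ produces the per-element bound
\begin{equation*}
\| \trv^{\beta-1}(\tu - \tPi^L_q \tu) \|_{L^2(K)} + \| \trv^{\beta} \nabla(\tu - \tPi^L_q \tu) \|_{L^2(K)} \lesssim C_\varepsilon \sigma^{i(\beta+s-1/2-\varepsilon)} e^{-bq}.
\end{equation*}
Summing squares over the $O(1)$ triangles per layer and over $i = 0, \ldots, L-1$ leaves a geometric series of ratio $\sigma^{2(\beta+s-1/2-\varepsilon)}$, whose sum is bounded uniformly in $L$ as soon as $\beta+s-1/2-\varepsilon > 0$; since by hypothesis $\beta > 1/2 - s$, this is achieved by fixing any $\varepsilon \in (0, \beta+s-1/2)$, and \eqref{eq:ExCnvC} follows. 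The main technical point is precisely this bookkeeping of exponents: the weighted regularity \eqref{eq:R-analytic-u-c} permits derivatives to blow up at $\mathbf{v}$, and after affine rescaling this degeneration reappears as the factor $\sigma^{i(s-1/2-\varepsilon)}$ on $\hT$, which itself may grow in $i$ when $s < 1/2$; the role of the weight exponent $\beta > 1/2 - s$ is precisely to absorb that growth so that the geometric summation produces a constant independent of $L$, while the exponential factor $e^{-bq}$ is carried through unchanged from the reference-element estimate.
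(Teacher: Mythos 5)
Your proposal is essentially the same argument as the paper's: decompose $\tTCint$ into the shape-regular triangles of $\Tcor\setminus\tlayC_0$, use $\trv\simeq h_K\simeq\sigma^i$ to convert the weighted hypothesis \eqref{eq:R-analytic-u-c} into element-wise analytic bounds of size $h_K^{s-1/2-\epsilon}$, scale to the reference triangle, apply Lemma~\ref{lemma:hat_Pi_infty}, scale back, and sum the resulting geometric series of ratio $\sigma^{2(\beta+s-1/2-\epsilon)}$ (your layer-indexing by $i$ is cosmetically equivalent to the paper's indexing by $h_{\Kt}$, since the two are comparable and there are $O(1)$ elements per layer).

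There is, however, one step that is genuinely missing. Your affine pull-back yields the estimate
$\|\hpartial^\alpha\hu\|_{L^2(\hT)}\lesssim C_\epsilon\,\sigma^{i(s-1/2-\epsilon)}\gamma^{p+1}p^p$,
i.e.\ control of the derivatives of $\hu$ in $L^2(\hT)$. But the hypothesis of Lemma~\ref{lemma:hat_Pi_infty} is a bound of the form $\|\nabla^n\hu\|_{L^\infty(\hT)}\leq C_u\gamma^n(n+1)^n$, i.e.\ it requires control in $L^\infty$. As written, your proposal feeds an $L^2$ bound into a lemma whose hypothesis is an $L^\infty$ bound, so the application of Lemma~\ref{lemma:hat_Pi_infty} is not justified. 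The paper closes this gap by inserting the Sobolev embedding $H^2(\hT)\hookrightarrow L^\infty(\hT)$: one estimates $\|\hpartial^\alpha\hu\|_{L^\infty(\hT)}$ by $\sup_{|\eta|\leq 2}\|\hpartial^{\alpha+\eta}\hu\|_{L^2(\hT)}$, which costs two extra derivative orders; these are absorbed into the Gevrey constants (turning $\gamma^{p+1}p^p$ into $\gamma_2^{p+3}(p+2)^{p+2}$, which is re-bounded by $C\gamma_3^{p+1}p^p$), leaving the factor $h_{\Kt}^{s-1/2-\epsilon}$ untouched. Once you insert this embedding step, your argument is correct and matches the paper's proof.
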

\begin{proof}
  All elements $\Kt \in \Tcor$ are shape regular: we denote by $h_{\Kt}$ their diameter. 
  For all $\Kt\in \Tcor \setminus \tlayC_0$, we have
  $\trv|_{\Kt}\simeq h_{\Kt}$ with equivalence constant uniform over
  $\Tcor\setminus \tlayC_0$.
   From this equivalence and 
    \eqref{eq:R-analytic-u-c} it follows that, 
    for all $\Kt\in \Tcor\setminus \tlayC_0$ and all $\alpha\in \N^2_0$, 
    there exists a constant $C_1 > 0$ such that

    \begin{equation*}
      \norm{\trv^{\alpham}\partial^\alpha \tu }_{L^2(\Kt)}
      \leq C_1C_\varepsilon h_{\Kt}^{1/2+s-\varepsilon} \gamma^{\alpham+1}\alpham^{\alpham}.
    \end{equation*}

    By a
    scaling argument, then, there exists a constant $\gamma_1 > 0$ such that 
    for all $\alpha\in \N^2_0$ and all $\Kt\in \Tcor\setminus \tlayC_0$,
    \begin{equation*}
      \norm{\hpartial^\alpha \left(\tu \circ A^L_{\Kt}\right) }_{L^2(\Kh)}
      \leq 
      C_1 C_\varepsilon h_{\Kt}^{-1/2+s-\varepsilon}\gamma_1^{\alpham+1} \gamma^{\alpham+1}\alpham^{\alpham}
    \end{equation*}
    with $\Kh = \hT = (A^L_{{\Kt}})^{-1}(\Kt)$. 
    Recalling $\hu = \tu \circ A^L_{\Kt}$, we
    can now exploit the embedding of $H^2(\Kh)$ into $L^\infty(\Kh)$ to obtain
    the existence of constants $C_2, \gamma_2>0$ such that
    \begin{equation*}
      \forall \alpha\in \N^2_0 \colon \quad
      \| \hpartial^\alpha \hu \|_{L^\infty(\Kh)}
      \leq C_2 h_{\Kt}^{s-1/2-\epsilon}\gamma_2^{\alpham+3} (\alpham+2)^{\alpham+2}.
    \end{equation*}
    It follows that there exists
    $C_3$, $\gamma_3>0$ such that
    \begin{equation}
      \label{eq:Linfty-Khat-c}
\forall \Kt\in \Tcor\setminus \tlayC_0\quad  \forall \alpha\in \bbN^2_0 \colon
\quad 
      \|\hpartial^\alpha \hu\|_{L^\infty(\Kh)}
      \leq 
      C_3 h_{\Kt}^{s-1/2-\epsilon} \gamma_3^{\alpham+1}\alpham^{\alpham}. 
    \end{equation}
  From Lemma \ref{lemma:hat_Pi_infty} and a scaling argument,
  it then follows that, for all $L, q\in\bbN$,
  \begin{equation*}
\| \trv^{\beta-1}(\tu - \tPi^L_{q}\tu )\|^2_{L^2(\Kt)}
+
\| \trv^{\beta}\nabla(\tu - \tPi^L_{q}\tu )\|^2_{L^2(\Kt)}
\lesssim h_{\Kt}^{2\beta+{2}s-1-2\epsilon} e^{-2bq}.
  \end{equation*}
Since $\beta>1/2-s$, the power of $h_{\Kt}$ is non-negative for every $\epsilon<\beta+s-1/2$ 
and 
summing the bound over all elements $\Kt\in \Tcor \setminus \tlayC_0$ concludes the proof
by a geometric series argument. 
\qed
\end{proof}
\subsubsection{$hp$-FE approximation in the 
               reference edge patch $\Tedg$}
\label{sec:R-e-patch}
%
In this section, we denote $\mathbf{e}=\{y = 0 \}$ and
 $\tre = \dist(\mathbf{e}, \cdot)$.
 Let $\tD_{x_\parallel} = \partial_x$  and $\tD_{x_\perp} = \partial_y$.
Furthermore, let 
\begin{equation*}
  \tlayE_0 = \{K\in \Tedg : \overline{K}\cap \mathbf{e}\neq \emptyset\}, \qquad 
 \tSEint = \widehat{S}\setminus \bigcup_{K\in \tlayE_0} \overline{K}
\end{equation*}
be, respectively, the elements abutting the singular boundary and the interior part of the edge reference patch,
see Fig.~\ref{subfig:L0-edge}.

\begin{lemma}[$hp$-FE approximation in reference edge patch $\Tedg$]
\label{lemma:R-edge-patch}
Let $s\in(0,1)$ and $\gamma > 0$ be fixed, and let $\tu$ be such that for all $\epsilon>0$ there exists $C_\varepsilon > 0$ such htat 
\begin{equation}
\label{eq:analytic-u-e-R}
\forall (\pperp , \ppar)\in \bbN_0^2 \colon \quad 
 \norm{\tilde{r}_{\mathbf{e}}^{\pperp-1/2-s+\varepsilon} \tD^{\pperp}_{x_\perp} \tD^{\ppar}_{x_{\parallel}} \tilde{u} 
      }_{L^2(\hS)}
\leq 
C_{\varepsilon}\gamma^{p+1}p^p ,
\end{equation}
with $p = \pperp+\ppar$.
Then, for all $\beta>1/2-s$ and all $0 < \epsilon < \beta + s -1/2$, 
there
exist constants $b_\Edg > 0$ (depending only on $\gamma$, $\beta$, $s$,
$\sigma$) and $C_{\Edg}>0$ (depending additionally on $\epsilon$)
such that for every $L$, $q\in \bbN$
\begin{equation}\label{eq:ExCnvE}
\| \tre^{\beta-1}(\tilde{u} - \tPi^L_q\tilde{u}) \|_{L^2(\tSEint)}
+
\| \tre^{\beta}\nabla(\tilde{u} - \tPi^L_q\tilde{u}) \|_{L^2(\tSEint)}
\leq C_{\Edg}  
C_\varepsilon
\exp(-b_{\Edg} q).
\end{equation}
\end{lemma}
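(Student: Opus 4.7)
The plan is to mirror the structure of Lemma~\ref{lemma:R-corner-patch}, but to carefully track the \emph{anisotropy} of the geometric edge mesh, for which the tangential direction is left unrefined while the normal direction is geometrically refined towards $\mathbf{e}=\{\ty=0\}$. Accordingly, the interior part of the patch decomposes as
\[
\tSEint \;=\; \bigcup_{i=0}^{L-1} \Kt_i,
\qquad
\Kt_i \simeq (0,1)\times (\sigma^{i+1},\sigma^i),
\]
and on each $\Kt_i$ one has $\tre\simeq \sigma^i$ with constants independent of $i$ and $L$. I would treat each layer $\Kt_i$ separately and then sum a geometric series in $i$.

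On a fixed layer element $\Kt_i$, I would first use \eqref{eq:analytic-u-e-R} combined with $\tre|_{\Kt_i}\simeq \sigma^i$ to extract an \emph{unweighted} analytic bound
\[
\bigl\|\tD_{x_\perp}^{\pperp}\tD_{x_\parallel}^{\ppar}\tu\bigr\|_{L^2(\Kt_i)}
\;\lesssim\;
\sigma^{\,i(1/2+s-\pperp-\varepsilon)}\,C_\varepsilon\,\gamma^{p+1} p^p.
\]
I would then pull back through the \emph{anisotropic} affine map $A^L_{\Kt_i}:\hS\to\Kt_i$, whose $\xh$-scaling is $1$ and $\yh$-scaling is $h_y\simeq\sigma^i$. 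The change of variables produces a factor $h_x^{\ppar-1/2}h_y^{\pperp-1/2}\simeq \sigma^{i(\pperp-1/2)}$, which cancels exactly the $\pperp$-dependence and leaves
\[
\|\hpartial^\alpha \hu\|_{L^2(\hS)}
\;\lesssim\;
\sigma^{\,i(s-\varepsilon)}\,C_\varepsilon\,\gamma_1^{|\alpha|+1} |\alpha|^{|\alpha|},
\qquad \hu=\tu\circ A^L_{\Kt_i}.
\]
A Sobolev embedding of $H^2(\hS)$ into $L^\infty(\hS)$, exactly as in Lemma~\ref{lemma:R-corner-patch}, upgrades this to an $L^\infty$-analytic bound of the form $\|\hpartial^\alpha\hu\|_{L^\infty(\hS)}\lesssim \sigma^{i(s-\varepsilon)}\gamma_2^{|\alpha|+1}|\alpha|^{|\alpha|}$, uniformly in $i$.

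Next, I would invoke the tensor-product GLL interpolation estimate of Lemma~\ref{lemma:hat_Pi_1_infty} on the reference square (which is designed exactly for rectangular elements, hence compatible with the anisotropy) to obtain
\[
\|\hpartial^{(k,\ell)}(\hu-\hPiGL\hu)\|_{L^2(\hS)}
\;\lesssim\;\sigma^{\,i(s-\varepsilon)}\,e^{-bq}
\qquad \text{for } k+\ell\le 1,
\]
for some $b>0$ depending on $\gamma$ and $\sigma$ only. Pushing this back to $\Kt_i$, the anisotropic Jacobian produces factors $h_y/h_x\simeq \sigma^i$ on $\|\partial_{x_\parallel}(\cdot)\|^2_{L^2(\Kt_i)}$ and $h_x/h_y\simeq \sigma^{-i}$ on $\|\partial_{x_\perp}(\cdot)\|^2_{L^2(\Kt_i)}$, which, when combined with the weight $\tre\simeq\sigma^i$ appearing as $\tre^{2(\beta-1)}$ and $\tre^{2\beta}$, yield
\[
\|\tre^{\beta-1}(\tu-\tPiLq\tu)\|^2_{L^2(\Kt_i)}
+
\|\tre^{\beta}\nabla(\tu-\tPiLq\tu)\|^2_{L^2(\Kt_i)}
\;\lesssim\;
\sigma^{\,i(2\beta-1+2s-2\varepsilon)}\,e^{-2bq}.
\]
Summing over $i=0,\dots,L-1$, the resulting geometric series converges (uniformly in $L$) precisely when $2\beta-1+2s-2\varepsilon>0$, which is the assumption $\varepsilon<\beta+s-1/2$. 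This gives \eqref{eq:ExCnvE}.

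I expect the main technical obstacle to be the bookkeeping of the anisotropic scaling in step~two (pull-back of the analytic estimate) and step~four (push-forward of the interpolation error in the weighted norm). In both places, the distinct scalings $h_x\simeq 1$ and $h_y\simeq\sigma^i$ must be tracked separately for each direction, and the interplay with the weight $\tre$ is what determines the admissible range of $\beta$. The condition $\beta>1/2-s$ emerges naturally as the threshold making the geometric sum over the $L$ layers convergent; any $\varepsilon$ strictly smaller than $\beta+s-1/2$ preserves this, exactly as in the vertex-patch case.
\qed
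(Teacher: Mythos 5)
Your proposal follows essentially the same approach as the paper: identify the per-layer weight $\tre|_{\Kt}\simeq h_{\perp,\Kt}$, use the hypothesis \eqref{eq:analytic-u-e-R} together with an anisotropic pull-back and a Sobolev embedding to obtain the $L^\infty$ analytic bound $\|\hpartial^{(\ppar,\pperp)}\hu\|_{L^\infty(\hS)}\lesssim h_{\perp,\Kt}^{s-\varepsilon}\tilde\gamma^{p+1}p^p$, invoke Lemma~\ref{lemma:hat_Pi_1_infty}, push back to the element, and sum the geometric series under $\varepsilon<\beta+s-1/2$. Your version is simply more explicit about the anisotropic Jacobian bookkeeping (the factors $h_\perp/h_\parallel$ and $h_\parallel/h_\perp$ for the two gradient components), which the paper compresses into the phrase ``a scaling argument''; the derivation is the same and the per-element estimate $h_{\perp,\Kt}^{2\beta+2s-1-2\varepsilon}e^{-2bq}$ coincides.
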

%
\begin{proof}
  We denote by $h_{\parallel, \Kt}$ and $h_{\perp, \Kt}$ the edge-lengths of the rectangle
  $\Kt\in \Tedg$ in, respectively, parallel and perpendicular directions to $\mathbf{e}$.
For all $\Kt\in \Tedg \setminus \tlayE_0$, we have
  $\tre|_{\Kt}\simeq h_{\perp, \Kt}$ with equivalence constant uniform over
  $\Tedg\setminus \tlayE_0$. From \eqref{eq:analytic-u-e-R}, an anisotropic 
  scaling argument, and a Sobolev embedding it follows that
  there exist $\widetilde{C}, \widetilde{\gamma}>0$ such that
  \begin{equation}
    \label{eq:edge-reg-1}
    \forall \Kt\in \Tedg\setminus\tlayE_0,\, \forall (\pperp, \ppar)\in \bbN_0^2: \qquad
    \| \hpartial^{(\ppar, \pperp)}\hu \|_{L^{\infty}(\Kh)}\leq \widetilde{C} h_{\perp,\Kt}^{s-\epsilon} \widetilde{\gamma}^{p+1}p^{p}
  \end{equation}
with 
$\Kh = \hS = (A^L_{{\Kt}})^{-1}(\Kt)$ 
and 
$p = \ppar+\pperp$ (see the derivation of \eqref{eq:Linfty-Khat-c} for the detailed steps).
  From Lemma~\ref{lemma:hat_Pi_1_infty} and a scaling argument, it then follows that
  \begin{equation*}
\| \tre^{\beta-1}(\tilde{u} - \tPi^L_q\tilde{u}) \|^2_{L^2(\Kt)}
+
\| \tre^{\beta}\nabla(\tilde{u} - \tPi^L_q\tilde{u}) \|^2_{L^2(\Kt)}
    \lesssim  
    h_{\perp, \Kt}^{2\beta+2s-1-2\epsilon} e^{-2bq}.
  \end{equation*}
  Summing this bound over all $\Kt\in \Tedg\setminus \tlayE_0$ using a geometric series argument concludes the proof
since $\beta+ s-1/2 -\varepsilon > 0$.
  \qed
\end{proof}

\subsubsection{$hp$-FE approximation in the reference vertex-edge patch $\Tce$}
\label{sec:R-ce-patch}
In this section, 
we denote $\mathbf{v} = (0,0)$, 
$\mathbf{e}=\{y = 0 \}$, $\trv = \dist(\mathbf{v}, \cdot)$, and 
$\tre = \dist(\mathbf{e}, \cdot)$.
Let $\tD_{x_\parallel} = \partial_x$  and $\tD_{x_\perp} = \partial_y$.
Furthermore, let 
\begin{equation*}
  \tlayCE_0 = \{K\in \Tce : \overline{K}\cap (\mathbf{e}\cup \mathbf{v})\neq \emptyset\}, \qquad 
\tTCEint= \widehat{T}\setminus \bigcup_{K\in \tlayCE_0} \overline{K} 
\end{equation*}
be, respectively, the elements abutting the singular boundary 
and the interior part of the vertex-edge reference patch, see Fig.~\ref{subfig:L0-ce}.
\begin{lemma}[$hp$-FE approximation in reference vertex-edge patch $\Tce$]
\label{lemma:R-ce-patch}
Let $s\in(0,1)$ and $\gamma > 0$ be fixed, and let $\tu$  be such that for all $\epsilon>0$ and
there exists $C(\varepsilon) > 0$ such that 
%
for all $(\ppar, \pperp )\in \bbN_0^2$ with $\ppar+\pperp=p$
\begin{equation}
\label{eq:R-analytic-u-ce}
\norm{\tilde{r}_{\mathbf{e}}^{\pperp-1/2-s + \varepsilon} \trv ^{\ppar+\varepsilon} \tD^{\pperp}_{x_\perp} 
        \tD^{\ppar}_{x_\parallel} u}_{L^2(\hT)} 
\leq 
C_{\varepsilon} \gamma^{p+1} p^p.
\end{equation}
Then, for all $\beta>1/2-s$ and all $0 < \epsilon < \beta/2 + s/2 - 1/4$,
there exist constants $b_\CE > 0$ (depending only on $\gamma$, $\beta$, $s$,
$\sigma$) and $C_{\CE}>0$ (depending additionally on $\epsilon$) 
such that for every $L$, $q\in \bbN$
\begin{equation}\label{eq:ExCnvM}
\| \tre^{\beta-1}(\tilde{u} - \tPi^L_q\tilde{u}) \|_{L^2( \tTCEint)}
+
\| \tre^\beta\nabla(\tilde{u} - \tPi^L_q\tilde{u}) \|_{L^2(\tTCEint )}
\leq
C_{\CE} {C_\varepsilon }\exp(-b_{\CE} q).
\end{equation}
\end{lemma}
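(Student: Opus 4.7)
The plan is to mirror the arguments of Lemmas~\ref{lemma:R-corner-patch} and~\ref{lemma:R-edge-patch}, exploiting the fact that, by Definition~\ref{def:admissible-patterns}, each element $\Kt \in \Tce\setminus\tlayCE_0$ is either a shape-regular triangle of diameter $h_\Kt\simeq\sigma^i$ located near the diagonal $\{\ty=\tx\}$ within the $i$-th refinement layer, or an anisotropic rectangle with tangential and normal side lengths $h_{\parallel,\Kt}\simeq\sigma^i$ and $h_{\perp,\Kt}\simeq\sigma^j$ for some $0\le i\le j\le L-1$.

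First I would treat the shape-regular triangles. On such $\Kt$ one has $\trv|_\Kt\simeq \tre|_\Kt\simeq h_\Kt$, and hence the product $\tre^{\pperp-1/2-s+\varepsilon}\trv^{\ppar+\varepsilon}$ in \eqref{eq:R-analytic-u-ce} collapses to $h_\Kt^{p-1/2-s+2\varepsilon}$, so that \eqref{eq:R-analytic-u-ce} degenerates into a bound of the same form as \eqref{eq:R-analytic-u-c} but with $\varepsilon$ replaced by $2\varepsilon$. The proof of Lemma~\ref{lemma:R-corner-patch} then carries over verbatim---isotropic scaling to $\hT$, Sobolev embedding $H^2\hookrightarrow L^\infty$, and application of Lemma~\ref{lemma:hat_Pi_infty}---yielding an elementwise contribution of order $h_\Kt^{2\beta+2s-1-4\varepsilon}e^{-2bq}$. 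The doubling of the $\varepsilon$-loss (4$\varepsilon$ here versus $2\varepsilon$ in the pure vertex case) is precisely what forces the more restrictive hypothesis $\varepsilon<\beta/2+s/2-1/4$.

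Next I would treat the anisotropic rectangles. On these elements $\trv|_\Kt\simeq\sigma^i\simeq h_{\parallel,\Kt}$ and $\tre|_\Kt\simeq\sigma^j\simeq h_{\perp,\Kt}$, so after an anisotropic scaling to $\Kh=\hS$ the powers of $\sigma^{-i}$ generated by differentiating $\ppar$ times in the tangential direction are absorbed by the $\trv^{\ppar+\varepsilon}$-weight in \eqref{eq:R-analytic-u-ce}, up to a harmless residual $\sigma^{-i\varepsilon}\le h_{\perp,\Kt}^{-\varepsilon}$ arising from $i\le j$. Combined with a Sobolev embedding as in the derivation of \eqref{eq:edge-reg-1}, this yields an $L^\infty$-regularity bound of the form $\|\hpartial^{(\ppar,\pperp)}\hu\|_{L^\infty(\Kh)}\lesssim h_{\perp,\Kt}^{s-2\varepsilon}\widetilde\gamma^{p+1}p^p$; applying Lemma~\ref{lemma:hat_Pi_1_infty} and scaling back to $\Kt$ then delivers an elementwise contribution of order $h_{\perp,\Kt}^{2\beta+2s-1-4\varepsilon}e^{-2bq}\simeq\sigma^{j(2\beta+2s-1-4\varepsilon)}e^{-2bq}$.

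Finally, I would assemble the global bound by summing over both families: the triangles contribute a single geometric series $\sum_{i=0}^{L-1}\sigma^{i(2\beta+2s-1-4\varepsilon)}$, while the rectangles contribute the double sum $\sum_{i=0}^{L-1}\sum_{j=i}^{L-1}\sigma^{j(2\beta+2s-1-4\varepsilon)}$. Under the assumption $\varepsilon<\beta/2+s/2-1/4$ the exponent $2\beta+2s-1-4\varepsilon$ is strictly positive, so both sums are bounded uniformly in $L$, and the factor $e^{-2bq}$ passes through the summation to deliver \eqref{eq:ExCnvM}. The main technical obstacle I anticipate is the bookkeeping in the rectangle case: one must track the simultaneous action of the two weights $\tre^{\pperp-1/2-s+\varepsilon}$ and $\trv^{\ppar+\varepsilon}$ under anisotropic scaling uniformly in $(\ppar,\pperp)$, and verify that no stray factor of $\sigma^{-i}$ survives which could spoil the outer geometric summation.
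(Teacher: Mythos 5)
Your approach matches the paper's: derive an $L^\infty$ bound on the scaled element via the weighted $L^2$ estimates and Sobolev embedding, apply the reference-element GLL interpolation bounds (Lemmas~\ref{lemma:hat_Pi_infty}, \ref{lemma:hat_Pi_1_infty}), scale back, and sum the resulting geometric series; the paper handles triangles and rectangles in one stroke (using only $\tre\simeq h_\perp \lesssim h_\parallel \simeq \trv$), whereas you split the two families explicitly, but that is a cosmetic difference. One small slip: your intermediate $L^\infty$ bound $\|\hpartial^{(\ppar,\pperp)}\hu\|_{L^\infty(\Kh)}\lesssim h_{\perp,\Kt}^{s-2\varepsilon}\widetilde\gamma^{p+1}p^p$ on the rectangles omits the factor $h_{\parallel,\Kt}^{-1/2}$ coming from the $(h_{\parallel}h_{\perp})^{-1/2}$ Jacobian in the $L^2$ change of variables, of which only the $h_\perp^{-1/2}$ part is absorbed by the weight; the correct statement, as in \eqref{eq:mix-reg-1}, reads $\lesssim h_{\perp,\Kt}^{s-\varepsilon}h_{\parallel,\Kt}^{-1/2-\varepsilon}\widetilde\gamma^{p+1}p^p$. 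This discrepancy is ultimately harmless because the leftover $h_{\parallel}^{-1-2\varepsilon}$ is cancelled by the $h_\parallel$ picked up when rescaling the $W^{1,\infty}(\Kh)$ error to the weighted $H^1(\Kt)$ norm, after which $h_\parallel^{-2\varepsilon}\leq h_\perp^{-2\varepsilon}$ finishes the job---so your per-element bound $h_{\perp,\Kt}^{2\beta+2s-1-4\varepsilon}e^{-2bq}$ and the geometric-series conclusion stand, but you should track the $h_\parallel$-power carefully before invoking the final absorption.
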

\begin{proof}
  Let $\Kt$ be an element not belonging to $\tlayCE_0$.
  We denote by $h_{\parallel, \Kt}$ and $h_{\perp, \Kt}$ the size of the rectangle $\Kt$
  in, respectively, parallel and perpendicular directions to $\mathbf{e}$.
  We have $\tre|_{\Kt}\simeq h_{\perp, \Kt}$ and $\trv|_{\Kt}\simeq
  h_{\parallel, \Kt}$ with uniform equivalence constants. From \eqref{eq:R-analytic-u-ce}, a
  scaling argument, and a Sobolev imbedding, it follows that there exist
  $\widetilde{C}, \widetilde{\gamma}>0$ such that
    for all $(\pperp, \ppar)\in \bbN_0^2$ with $p = \ppar + \pperp$
  \begin{equation}
    \label{eq:mix-reg-1}
\forall \Kt\in \Tce\setminus \tlayCE_0 \colon \quad 
    \| \hD^{\ppar}_{x_\parallel} \hD^{\pperp}_{x_\perp}\hu \|_{L^\infty(\Kh)}\leq \widetilde{C} h_{\perp,\Kt}^{s-\epsilon} h_{\parallel, \Kt}^{-1/2-\epsilon}\widetilde{\gamma}^{p+1}p^p. 
  \end{equation}
By a scaling argument (dropping temporarily the subscript $\cdot_{,\Kt}$)
\begin{align*}
&\| \tre^{\beta-1}(\tilde{u} - \tPi^L_q\tilde{u}) \|^2_{L^2(\Kt)}
+
\| \tre^\beta\nabla(\tilde{u} - \tPi^L_q\tilde{u}) \|^2_{L^2(\Kt)}
  \\
  & \qquad \lesssim
    \begin{multlined}[t][.9\textwidth]
    h_{\perp}^{2\beta}
      \bigg(   \frac{h_\parallel}{h_\perp}  \left( \| \hu -
        \hPi_q \hu \|^2_{L^2(\Kh)} + \| \hD_{\perp} (\hu-\hPi_q\hu)\|^2_{L^2(\Kh)}
      \right) 
      \\ +  \frac{h_{\perp} }{h_{\parallel}} \| \hD_{\parallel} (\hu-\hPi_q\hu)\|^2_{L^2(\Kh)}\bigg)
    \end{multlined}\\
  & \qquad \leq h_\perp^{2\beta-1}h_{\parallel} \| \hu - \hPi_q  \hu \|^2_{W^{1,\infty}(\Kh)},
    \end{align*}
    where the penultimate estimate follows from 
    $h_\perp\simeq\tre \lesssim \trv\simeq h_\parallel$ in $\Tce \backslash \tlayCE_0$.
From Lemmas \ref{lemma:hat_Pi_infty} and \ref{lemma:hat_Pi_1_infty},  using
\eqref{eq:mix-reg-1} then gives
\begin{equation}
  \label{eq:last-Kt-ve}
\| \tre^{\beta-1}(\tilde{u} - \tPi^L_q\tilde{u}) \|^2_{L^2(\Kt)}
+
\| \tre^\beta\nabla(\tilde{u} - \tPi^L_q\tilde{u}) \|^2_{L^2(\Kt)}
  \lesssim 
h_{\perp, \Kt}^{2\beta+2s-1-2\epsilon} h_{ \parallel , \Kt}^{-2\epsilon}e^{-2bq}.
\end{equation}
From $\beta>1/2-s$ it follows that $\varepsilon>0$ can be chosen so that
$2\beta +2s-1 > 4\varepsilon$. 
In addition, 
$h_{\perp,\Kt}\leq h_{\parallel, \Kt}$. 
Hence, there exists $\delta>0$ such that, 
for all $\epsilon$ as specified above, 
$h_{\perp, \Kt}^{2\beta+2s-1-2\epsilon} h_{\parallel , \Kt}^{-2\epsilon} \leq h_{\perp, \Kt}^\delta$. 
Then,
\begin{align*}
        \sum_{\Kt \in \Tce\setminus \tlayCE_0} h_{\perp,\Kt}^{ \delta } 
          &
         =(1-\sigma)^\delta\sum_{i=0}^{L-1} \sum_{j=i}^{L-1}\sigma^{\delta j}
       =\frac{(1-\sigma)^\delta}{1-\sigma^\delta}\sum_{i=0}^{L-1} \left( \sigma^{\delta i}  - \sigma^{\delta L}\right) 
  \\          & \leq 
\frac{(1-\sigma)^\delta}{(1-\sigma^\delta)^2}.
\end{align*}
Summing \eqref{eq:last-Kt-ve} over all elements in $\Tce\setminus \tlayCE_0$ concludes the proof.
\qed
\end{proof}
  \begin{remark}
  The dependence on $\epsilon$ of the constants $C_{\Co}$, $C_{\Edg}$, $C_{\CE}$ of Lemmas \ref{lemma:R-corner-patch},
  \ref{lemma:R-edge-patch}, and \ref{lemma:R-ce-patch} can be dropped if, for $\bullet\in \{ \Co,\Edg,\CE \}$,
  the constant $C_{\bullet}$ is replaced
by $\tC_{\Co} L$ in \eqref{eq:ExCnvC}, by $\tC_{\Edg}L$  in \eqref{eq:ExCnvE}, and by $\tC_{\CE} L^2$ in
  \eqref{eq:ExCnvM}. The newly introduced constants $\tC_\bullet$ are independent of the
  choice of $\epsilon$.

  This has no effect on the final result. Considering the
  dependence of $C_\bullet$ on $\epsilon$, a fixed value of $\epsilon$ is chosen
  in the proof of Theorem \ref{thm:hpExpConv}, independently of $\beta$ and $s$. If one
  were to use instead the results with the constants $\tC_\bullet$, the terms in $L$ and $L^2$
  can be absorbed in the exponential $e^{-b_\bullet q}$ after having set $q\sim L$.
  \eremk
  \end{remark}
\subsubsection{Global error bound (Proof of Theorem~\ref{thm:hpExpConv})}
\label{sec:GlobErrBd}
\begin{proof}[Proof of Theorem \ref{thm:hpExpConv}]
Recall that $W^L_q = \mathcal{S}^q_0(\Omega, \Tg) $ is the space of continuous,
piecewise polynomials of maximum degree $q$ on a mesh with $L$ levels of refinement.
From \eqref{eq:error-decomp}, Lemma \ref{lemma:u-gu}, and Lemma
  \ref{lemma:cutoff-equiv} it follows that for all $\beta\in [0, 1-s)$
  \begin{equation}
    \label{eq:u-v_triangle_proof}
    \inf_{v\in W^L_q}\| u - v \|_{\tH^s(\Omega)} \lesssim \| u - \Pi^L_{q-1}u \|_{H^1_\beta(\Omega\setminus \Omega^L_0)} + \exp(-b_1L).
  \end{equation}
Remark that we can choose (potentially overlapping) $\omegac$, $\omegae$, and
$\omegace$ so that for all $\KM\in \cT^\M_{\Edg}$, $\KM\subset\omegae$; for
all $\KM\in \cT^\M_{\CE}$, $\KM\subset \omegace$; for all 
$\KM\in \cT^\M_{\Co}$, either $\KM\subset \omegac$ or $\KM\subset \omegae$.
In other words, the edge and vertex-edge patches in the domain $\Omega$ are,
respectively, contained in $\omegae$ and $\omegace$; the vertex patch is either
contained in $\omegac$ or $\omegae$, with origin mapped to a point 
on a vertex or along an edge.

Suppose now that $u$ satisfies
\eqref{eq:analytic-u-c-all}--\eqref{eq:analytic-u-int}. Consider a patch $\KM\in
\cT^\M$ and denote $\tu = u \circ F_{\KM}$.
Let $\partial_{\tx}$ be
differentiation with respect to the variable $\tx  = F_{\KM}^{-1}(x)$ and
let $D_{\tx_\perp}$ and $D_{\tx_\parallel}$ be differentiation in directions respectively
perpendicular and parallel to an edge pulled back to the reference patch.
Let also $\tbv = (0 , 0)$, $\tbe = (0, 1)\times \{0\}$ and denote
$\trv(\tx) = |\tx - \tbv|$, and $\tre(\tx) = \dist(\tx, \tbe)$, for all $\tx\in F_{\KM}^{-1}(\KM)$.

\paragraph{\textbf{Case $\KM \in \cT^\M_{\Edg}$.}}
Since $F_{\KM}$ is affine and since it maps the closure of $\{(x_1, x_2) \in \hS: x_2=0\}$ to
$\partial \Omega\cap \partial \KM$, its Jacobian $J_{F_{\KM}}$ can be
written as the composition of an upper triangular matrix $U_{\KM}$ and a rotation $R_{\KM}$:
\begin{equation*}
  J_{F_{\KM}} =  R_{\KM} U_{\KM} .
\end{equation*}
Without loss of generality, 
    we may assume the coordinate systems oriented such that
    the vector $(1, 0)^\top$, parallel to the singular edge in $\hS$,
is mapped to $\epar = R_{\KM}(1,0)^\top$. 
We remark that, since $U_{\KM}$ is upper triangular, there exists $\eta_{\KM}\in \R$ such that
\begin{equation*}
  U_{\KM} (1, 0)^\top = \eta_{\KM} (1, 0)^\top.
\end{equation*}
Hence,
\begin{equation*}
  D_{\tx_\parallel} =
  \begin{pmatrix}
    1 \\ 0
  \end{pmatrix}
  \cdot
  \nabla_{\tx} =
  \begin{pmatrix}
    1 \\ 0
  \end{pmatrix}
  \cdot (J_{F_{\KM}}^\top\nabla_x) = \eta_{\KM} 
  \begin{pmatrix}
    1 \\ 0
  \end{pmatrix} \cdot (R_{\KM}^\top \nabla_x) = \eta_{\KM} \epar \cdot \nabla_x = \eta_{\KM} D_{x_\parallel}.
\end{equation*}
By a similar argument, there exist $\beta_1, \beta_2 \in \R$ such that
\begin{equation*}
  D_{\tx_\perp} = \beta_1 D_{x_\parallel} + \beta_2 D_{x_\perp}.
\end{equation*}
Finally, there exists $c_{K_M} > 0 $ such that for all  $x\in \hS$,
\begin{equation*}
  \frac{1}{c_{\KM}} \tre(x) \leq r_{\mathbf{e}}(F_{\KM}(x)) \leq c_{\KM} \tre(x).
\end{equation*}
Hence,
\begin{align*}
  & \norm{\tilde{r}_{\mathbf{e}}^{\pperp-1/2-s+\varepsilon} D^{\pperp}_{\tx_\perp} D^{\ppar}_{\tx_{\parallel}} \tilde{u} 
  }_{L^2(F_{\KM}^{-1}(\KM))}
  \\ & \qquad
  \leq 
  C c_{\KM}^{\pperp-1/2-s+\epsilon}\eta_{\KM}^{\ppar}
  \|r_{\mathbf{e}}^{\pperp-1/2-s+\epsilon} (\beta_1D_{x_\parallel} + \beta_2D_{x_\perp})^{\pperp} D_{x_\parallel}^{\ppar} u \|_{L^2(\KM)}
  \\ & \qquad
       \leq
       C c_{\KM}^{\pperp-1/2-s+\epsilon}\eta_{\KM}^{\ppar} (\beta_1+\beta_2)^{\pperp}
       \max_{j=0, \dots, \pperp}\|r_{\mathbf{e}}^{\pperp-1/2-s+\epsilon}  D_{x_\perp}^jD_{x_\parallel}^{\ppar +\pperp- j} u \|_{L^2(\KM)}.
\end{align*}
It follows then from
\eqref{eq:analytic-u-e-all} that
there exist $\tC_\epsilon, \tgamma>0$ such that, for all 
$(\pperp , \ppar)\in \bbN_0^2$ with $p=\pperp+\ppar$ and 
for all $\KM \in \cT^\M_{\Edg}$, 
we obtain
\begin{equation*}
  \norm{\tilde{r}_{\mathbf{e}}^{\pperp-1/2-s+\varepsilon} D^{\pperp}_{\tx_\perp} D^{\ppar}_{\tx_{\parallel}} \tilde{u} 
      }_{L^2(F_{\KM}^{-1}(\KM))}
\leq 
\tC_{\varepsilon}\tgamma^{p+1}p^p .
\end{equation*}
\paragraph{\textbf{Case $\KM\in \cT^\M_{\CE}$.}}
This case is treated as the previous one, noting that in addition
\begin{equation}
  \label{eq:rv-trv}
  \frac{1}{c_{\KM}} \trv(x) \leq r_{\mathbf{v}}(F_{\KM}(x)) \leq c_{\KM} \trv(x).
\end{equation}
We obtain
from
\eqref{eq:analytic-u-ce-all} that
there exist $\tC_\epsilon, \tgamma>0$ such that, for all 
$(\pperp , \ppar)\in \bbN_0^2$ with $p=\pperp+\ppar$ 
and for all $\KM\in \cT^\M_{\CE}$,
\begin{equation*}
\norm{\tilde{r}_{\mathbf{e}} ^{\pperp-1/2-s + \varepsilon} \trv ^{\ppar+\varepsilon} D^{\pperp}_{\tx_\perp} D^{\ppar}_{\tx_\parallel} \tilde u}_{L^2(F_{\KM}^{-1}(\KM))} 
\leq \tC_{\varepsilon} \tgamma^{p+1} p^p.
\end{equation*}
\paragraph{\textbf{Case $\KM\in \cT^\M_{\Co}$.}}
If $\KM\subset \omegac$, then \eqref{eq:rv-trv} holds. 
In addition, there exists a constant $\tilde{c}$ such that, 
for all $\alpha\in \N^2_0$,
\begin{equation}
  \label{eq:td-d}
  \|\trv^{\alpham-1/2-s+\epsilon}\partial_{\tx}^\alpha \tu\|_{L^2(F_{\KM}^{-1}(\KM))}
  \leq
  c_{\KM}^{\alpham-1/2-s+\epsilon}\tilde{c}^{\alpham} 
  \max_{\beta\leq \alpha} \|r_{\mathbf{v}}^{\alpham-1/2-s+\epsilon}\dbeta u\|_{L^2(\KM)}.
\end{equation}
Therefore, \eqref{eq:analytic-u-c-all} implies 
\begin{equation*}
\norm{\tilde{r}_{\mathbf{v}}^{\alpham-1/2-s+\varepsilon} \partial_{\tx}^\alpha \tilde{u} }_{L^2(F_{\KM}^{-1}(\KM))}
\leq 
\tC_{\varepsilon} \tgamma^{\alpham+1}\alpham^{\alpham}.
\end{equation*}
If instead $\KM\subset \omegae$, there
exists $c_{\KM}$ such that
\begin{equation*}
  \frac{1}{c_{\KM}} \trv(x) \leq r_{\mathbf{e}}(F_{\KM}(x)) \leq c_{\KM} \trv(x)
\end{equation*}
for all $x\in \KM$, with $\mathbf{e}$ being the edge such that $\mathbf{e}\cap
\partial \KM\neq \emptyset$.
It follows 
from \eqref{eq:td-d} and \eqref{eq:analytic-u-e-all} that, for all $\alpha\in \N^2_0$,
\begin{equation*}
\norm{\tilde{r}_{\mathbf{v}}^{\alpham-1/2-s+\varepsilon} \partial_{\tx}^\alpha \tilde{u} }_{L^2(F_{\KM}^{-1}(\KM))}
\leq 
\tC_{\varepsilon} \tgamma^{\alpham+1}\alpham^{\alpham}.
\end{equation*}
\paragraph{\textbf{Case $\KM\in \cT^\M_{\mathrm{int}}$.}} If $\KM\in
\cT^\M_{\mathrm{int}}$, then $\tu|_{F_{\KM}^{-1}(\KM)}$ is analytic. 

Since the macro triangulation $\cT^\M$ is fixed,
all constants depending on $\KM$ can be taken uniformly over the macro triangulation.
We have obtained that for all $\KM\in \cT^\M$, the restriction of $\tu$ to
$F^{-1}_{\KM}(\KM)$ satisfies the hypotheses of Lemmas \ref{lemma:R-corner-patch} to \ref{lemma:R-ce-patch}. 

Restricting
$\beta \in (1/2-s, 1-s)$ in \eqref{eq:u-v_triangle_proof} and using Lemmas
\ref{lemma:R-corner-patch} to \ref{lemma:R-ce-patch} gives therefore
\begin{equation*}
  \inf_{v\in W_q^L}\| u - v \|_{\tH^s(\Omega)} \leq C\left( \exp(-b_2 q)  + \exp(-b_1 L) \right).
\end{equation*}
Choosing $q\simeq L$, $V_N\coloneqq W^L_q$, and remarking that $\dim(W^L_q)\simeq q^2 L^2$ concludes the proof.
 \qed
\end{proof}
\section{Conclusions}
\label{sec:Concl}
We proved \emph{exponential rates of approximation for a class of 
  $hp$-Finite Element approximations}
of the Dirichlet problem for the integral fractional Laplacian
in a bounded, polygonal domain $\Omega \subset \mathbb{R}^2$, 
with analytic source term $f$,
based on anisotropic, geometric boundary-refined meshes.
The \emph{realization of corresponding $hp$-FE algorithms} will incur significant
issues of \emph{numerical quadrature for stable numerical evaluation of the bilinear form}
$a(\cdot,\cdot)$ in \eqref{eq:weakform} on pairs of large aspect ratio rectangles 
in the geometric boundary mesh patches shown in Fig.~\ref{fig:patches}.
While being in princple known (see, e.g., \cite{SS97_317} for a related discussion in 
$hp$ Galerkin boundary element methods on polyhedral domains), 
the corresponding consistency
analysis for the form $a(\cdot,\cdot)$ in \eqref{eq:weakform}
in the space $\widetilde{H}^s(\Omega)$
in \eqref{eq:Htilde} will be the topic of a forthcoming work.

Here, we analyzed only the 
convergence rate of the $hp$-Galerkin discretization \eqref{eq:GalV}
based on the subspaces $W^L_q$ in \eqref{eq:S^q_0} with geometric,
boundary-refined triangulations $\Tg$.
Similar techniques, based again on the anisotropic, weighted high-order Sobolev
regularity
of the solution $u$ of~\eqref{eq:FracLap} 
proved in \cite{FMMS21_983}, allow to infer
optimal algebraic rates of convergence 
$O(h^{q+1-s}) = O(N^{-(q+1-s)/2})$ 
in the 
$\tH^s(\Omega)$-norm
for continuous, piecewise polynomial Lagrangian Finite Elements of 
order $q\geq 1$.
To this end, however, the geometric boundary-refined partitions $\Tg$
in the design of the spaces  $W^L_q$ in \eqref{eq:S^q_0}  
must be replaced by \emph{boundary-refined graded partitions}.
Details shall be reported elsewhere.
\appendix
\normalsize
\section{Polynomial approximation operators on the reference element}
\label{sec:approx-reference-element}
The following two lemmas are consequences of 
\cite[Lemma 3.1, 3.2]{banjai-melenk-schwab19-RD}.
%
\begin{lemma}[approximation on triangles]
\label{lemma:hat_Pi_infty}
Let $\widehat T$ be the reference triangle. 
Then, for every $q \in {\mathbb N}$, 
there exists a linear operator 
$\widehat \Pi^\triangle_q:C^0(\overline{\widehat T})\rightarrow {\bbP}_q$ 
with the following properties: 
\begin{enumerate}
\item 
\label{item:lemma:hat_Pi_infty--1}
For each edge $e$ of $\widehat T$, 
$(\PiT u)|_e$ coincides with the 
Gauss-Lobatto interpolant $i_q (u|_e)$ of degree $q$ on the edge $e$.
\item (projection property)
\label{item:lemma:hat_Pi_infty-0}
$\PiT v = v$ for all $v\in {\bbP}_q$. 
%


\item 
\label{item:lemma:hat_Pi_infty-iii}
Let $u \in C^\infty(\widehat T)$ satisfy, 
for some $C_u$, $\gamma > 0$ 
$$ 
\forall n \in {\mathbb N}_0 \colon\;\;
\|\nabla^n u\|_{L^\infty(\widehat T)} 
\leq 
C_u \gamma^n
(n+1)^n . 
$$
  Then, there exist $C, b>0$ such that for all $q\geq 1$
  \begin{equation*}
\|u - \PiT u\|_{W^{1,\infty}(\widehat T)} \leq C C_u e^{-bq}.
  \end{equation*}
\end{enumerate}
\end{lemma}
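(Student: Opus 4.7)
The plan is to construct $\widehat\Pi_q^\triangle$ by a three-stage procedure on $\widehat T$: first pin down the three edge traces by univariate Gauss--Lobatto interpolation, then lift these boundary data to a polynomial on $\widehat T$ by a stable polynomial extension operator, and finally correct the remaining interior error by an $H^1_0$-conforming projection into $\mathbb P_q\cap H^1_0(\widehat T)$. Property \ref{item:lemma:hat_Pi_infty--1} is then built in by construction, and the projection property \ref{item:lemma:hat_Pi_infty-0} follows as long as each of the three building blocks reproduces polynomials of degree $\leq q$.

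More concretely, I would first define $i_q^e u$ on each edge $e\subset\partial\widehat T$ as the one-dimensional GLL interpolant of degree $q$; these are compatible at the three vertices because GLL nodes include the endpoints, so together they yield a continuous piecewise polynomial datum on $\partial\widehat T$. Next I invoke a polynomial lifting $\mathcal E_q:\mathbb P_q(\partial\widehat T)\to\mathbb P_q(\widehat T)$ of the Babu\v ska--Suri / Mu\~noz-Sola type, which is stable in $W^{1,\infty}$ up to at most an algebraic factor in $q$ and which is exact on the restriction of polynomials $v\in\mathbb P_q(\widehat T)$. Set $\widetilde u := u - \mathcal E_q(i_q u|_{\partial\widehat T})$; this residual is a $C^\infty$ function with zero trace on $\partial\widehat T$. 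Finally, project $\widetilde u$ into $\mathbb P_q\cap H^1_0(\widehat T)$ with any of the standard $hp$ projectors on the reference triangle for which exponential convergence for analytic functions is known (e.g.\ the one used in \cite{melenk02,banjai-melenk-schwab19-RD}). The operator $\widehat\Pi_q^\triangle u$ is the sum of the lifted boundary part and this interior projection.

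For property \ref{item:lemma:hat_Pi_infty-iii}, I would combine three exponential estimates. The univariate GLL interpolation of an analytic edge trace satisfies $\|u|_e - i_q u|_e\|_{W^{1,\infty}(e)}\leq CC_u e^{-bq}$ by standard analytic-interpolation results on $(-1,1)$; the polynomial lifting $\mathcal E_q$ is bounded in $W^{1,\infty}$ (polynomially in $q$, which is absorbed into the exponential); and the interior projection of the residual $\widetilde u$ converges exponentially because $\widetilde u$ inherits the analytic regularity of $u$ (with slightly enlarged constants) from the analyticity of $u$ and of $\mathcal E_q\circ i_q$ applied to $u|_{\partial\widehat T}$. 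Multiplying and collecting exponential factors yields $\|u-\widehat\Pi_q^\triangle u\|_{W^{1,\infty}(\widehat T)}\leq CC_u e^{-bq}$, with possibly a smaller $b$ than those of the individual pieces.

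The main obstacle is the second stage: producing an explicit polynomial lifting $\mathcal E_q$ whose $W^{1,\infty}$-norm grows at most polynomially in $q$ while also being exact on $\mathbb P_q$, since the classical liftings are typically only analyzed in $H^s$ scales. This is exactly the point addressed in \cite[Lem.~3.1]{banjai-melenk-schwab19-RD}, so I would adapt their construction verbatim rather than reprove it; everything else (edge GLL stability, analytic residual, interior projection) is a routine assembly of known $hp$-approximation ingredients.
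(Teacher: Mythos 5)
The paper does not prove Lemma~\ref{lemma:hat_Pi_infty}: it simply cites \cite[Lemmas 3.1, 3.2]{banjai-melenk-schwab19-RD}, and your overall three-stage strategy (edge GLL trace, polynomial boundary lifting, interior correction) is broadly the one used in that reference. So in spirit you are on the right track.

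However, there is a concrete error in the middle of your argument. You set
$\widetilde u := u - \mathcal E_q\bigl(i_q u|_{\partial\widehat T}\bigr)$
and then assert that $\widetilde u$ has \emph{zero trace} on $\partial\widehat T$. It does not: by construction $\mathcal E_q$ reproduces its boundary datum, so $\widetilde u|_{\partial\widehat T} = u|_{\partial\widehat T} - i_q u|_{\partial\widehat T}$, i.e.\ the GLL interpolation error, which is exponentially small but not identically zero (it vanishes only at the $q+1$ GLL nodes of each edge). Consequently $\widetilde u\notin H^1_0(\widehat T)$, and you cannot directly invoke an $hp$ projector onto $\mathbb P_q\cap H^1_0(\widehat T)$ whose analysis presupposes vanishing boundary values; nor does $\widetilde u$ inherit the analytic estimates in an immediately usable $H^1_0$-form. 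There are two clean fixes, both of which avoid the issue entirely: (a) re-order the construction as $\widehat\Pi_q^\triangle u := J_q u + \mathcal E_q\bigl(i_q u|_{\partial\widehat T} - (J_q u)|_{\partial\widehat T}\bigr)$, where $J_q u\in\mathbb P_q$ is any unconstrained exponentially convergent polynomial approximant of $u$; then the corrected trace mismatch is a polynomial of degree $q$ on $\partial\widehat T$ with exponentially small $W^{1,\infty}$ norm, and the polynomial growth of $\mathcal E_q$ in $q$ is harmlessly absorbed; or (b) keep your decomposition but observe only that $\inf_{p\in\mathbb P_q\cap H^1_0}\|\widetilde u - p\|_{W^{1,\infty}(\widehat T)}$ is controlled by the sum of the interior approximation error of the ``bulk'' part of $\widetilde u$ and the exponentially small boundary trace, and argue these two contributions separately rather than via a projector that requires $H^1_0$ membership. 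As written, the ``zero trace'' claim is the one step that does not hold; the rest of your assembly (edge-trace compatibility at vertices, exactness of each piece on $\mathbb P_q$ giving property \ref{item:lemma:hat_Pi_infty-0}, and absorption of polynomial-in-$q$ lifting constants into the exponential) is sound.

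Finally, note that property \ref{item:lemma:hat_Pi_infty-iii} requires a $W^{1,\infty}$ error bound, whereas many of the standard interior $hp$ projectors with exponential convergence for analytic data are analyzed in $L^2$- or $H^1$-type norms; when you choose the interior projection in your step three (or the unconstrained approximant $J_q$ in fix (a)), make sure it is one of those, such as the truncated Chebyshev/Legendre expansions of \cite{melenk02} or the operators of \cite{banjai-melenk-schwab19-RD}, for which $L^\infty$ and $W^{1,\infty}$ exponential bounds are explicitly available.
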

%
%
\begin{lemma}[approximation on quadrilaterals]
\label{lemma:hat_Pi_1_infty}
Let $\widehat S$ be the reference square. 
For each $q \in {\mathbb N}$,
the tensor-product Gauss-Lobatto interpolation operator 
$\PiGL: C^0(\overline{\widehat S}) \rightarrow {\bbQ}_q$ 
satisfies the following: 
\begin{enumerate}
\item 
\label{item:lemma:hat_Pi_1_infty-0}
For each edge 
$e\subset \partial\widehat S$, 
$(\PiGL u)|_e$ coincides with the 
univariate Gauss-Lobatto interpolant $i_q (u|_e)$ on $e$.

\item (projection property)
\label{item:lemma:hat_Pi_1_infty--1}
$\PiGL v = v$ for all $v \in {\bbQ}_q$. 
%
\item 
\label{item:lemma:hat_Pi_1_infty-ii}
Let $u \in C^\infty(\widehat S)$ satisfy 
for some $C_u$, $\gamma > 0$, 
and all
$(n,m) \in {\mathbb N}_0^2 $
\begin{equation}
\label{eq:lemma:hat_Pi_1_infty-reg}
\|\partial_\xh^m \partial_\yh^n u\|_{L^\infty(\widehat S)} 
\leq 
C_u \gamma^{n+m}
(n+1)^n (m+1)^m.
\end{equation}
Then, there exist $C, b>0$ such that for all $q\geq 1$
 \begin{equation*}
\|u - \PiGL u\|_{W^{1,\infty}(\widehat S)}  \leq C C_u e^{-bq}.
 \end{equation*}
\end{enumerate}
\end{lemma}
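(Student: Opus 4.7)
The plan is to define $\PiGL = i^x_q \circ i^y_q$, where $i^x_q$ and $i^y_q$ denote the application of the univariate degree-$q$ Gauss-Lobatto interpolant in the $\xh$ and $\yh$ variables respectively (treating the other variable as a parameter), with interpolation nodes given by the $q+1$ Gauss-Lobatto-Legendre points in $[0,1]$. Items (i) and (ii) are immediate consequences of this tensor-product structure. For (i), on an edge $e \subset \partial \hS$ parallel to, say, the $\xh$-axis, the $\yh$-coordinate is fixed at $0$ or $1$, both of which are GLL nodes; hence $i^y_q$ acts as the identity along $e$ and $(\PiGL u)|_e = i_q(u|_e)$. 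The other two edges are treated symmetrically. For (ii), any $v \in \bbQ_q$ is a linear combination of tensor monomials $\xh^i\yh^j$ with $i,j \leq q$, on each of which the univariate GLL interpolants act as the identity by their own projection property, so $\PiGL v = v$.

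For (iii), which is the main content, I would use the splitting
\begin{equation*}
u - \PiGL u = (I - i^x_q) u + i^x_q\bigl((I - i^y_q)u\bigr)
\end{equation*}
together with univariate exponential-convergence estimates applied fibrewise. Under the hypothesis \eqref{eq:lemma:hat_Pi_1_infty-reg}, fixing $\yh\in[0,1]$ the map $\xh \mapsto u(\xh,\yh)$ is analytic on $[0,1]$ with a Gevrey-$1$ bound $\|\partial_{\xh}^m u(\cdot,\yh)\|_{L^\infty([0,1])} \leq C_u\gamma^m(m+1)^m$ that is uniform in $\yh$, and symmetrically for $\yh$-sections. Invoking \cite[Lem.~3.1]{banjai-melenk-schwab19-RD} (the univariate Gauss-Lobatto approximation result), applied at each fixed $\yh$ and integrated in $\yh$, yields $\|(I - i^x_q)u\|_{W^{1,\infty}(\hS)} \leq C C_u e^{-bq}$ for some $C,b>0$ depending only on $\gamma$. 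The second summand is bounded analogously: $(I - i^y_q)u$ satisfies an $L^\infty$ estimate of the same exponential type by the univariate result, and $i^x_q$ is uniformly bounded on $W^{1,\infty}([0,1])$ up to the Lebesgue-constant factor $\Lambda_q \sim \log q$, which is absorbed into a slightly smaller exponential rate.

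The main obstacle is the univariate step, namely showing that a Gevrey-$1$ derivative bound on $[0,1]$ produces exponential convergence of the Gauss-Lobatto interpolant in $W^{1,\infty}$. The classical route is to extend such a function analytically into a Bernstein-type ellipse around $[0,1]$ whose semi-axis sum is controlled from below by $1/(e\gamma)$, and then to bound the Lagrange interpolation remainder at the GLL nodes through a standard contour argument on the ellipse boundary; control of the first derivative follows from a Markov-type inequality together with the same contour bound applied to $\partial_{\xh}u$. Since this univariate machinery is precisely what is established in \cite[Lem.~3.1, 3.2]{banjai-melenk-schwab19-RD}, the proof of Lemma~\ref{lemma:hat_Pi_1_infty} reduces, after the tensor-product decomposition above, to a direct citation of those results combined with the stability estimate for $i^x_q$ to handle the second summand.
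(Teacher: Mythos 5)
Your approach is correct and is the standard tensor-product argument. Note, however, that the paper does not actually prove this lemma: it states it (together with the triangle case) as a direct consequence of \cite[Lemmas 3.1, 3.2]{banjai-melenk-schwab19-RD}, with Lemma~3.2 there being the quadrilateral case; your attribution of ``the univariate Gauss-Lobatto approximation result'' to Lemma~3.1 of that reference is off, since Lemma~3.1 is the triangle result (the present Lemma~\ref{lemma:hat_Pi_infty}). What you have written is a reconstruction of the argument that the cited lemma relies on: tensor-product factorization $\PiGL = i^x_q\circ i^y_q$, the telescoping error split $u-\PiGL u=(I-i^x_q)u+i^x_q\bigl((I-i^y_q)u\bigr)$, fibrewise univariate exponential convergence from the Gevrey-1 derivative bounds, and stability of $i^x_q$ to absorb polynomial-in-$q$ growth into the exponential. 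The one place to tighten is the $W^{1,\infty}$ bound for the second summand: $\Lambda_q\sim\log q$ is only the $L^\infty$ Lebesgue constant, so it handles $\partial_{\widehat y}\bigl[i^x_q(I-i^y_q)u\bigr]=i^x_q\bigl[(I-i^y_q)\partial_{\widehat y}u\bigr]$ directly, but for $\partial_{\widehat x}\bigl[i^x_q(I-i^y_q)u\bigr]$ you need either a Markov inequality (introducing a $q^2$ factor, which is still harmless) or the observation that $(I-i^y_q)u$ satisfies the univariate Gevrey hypothesis in $\widehat x$ with the constant $C_u$ inflated by $(1+\Lambda_q)$, after which the one-dimensional $W^{1,\infty}$ estimate applies directly. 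Either route closes; your mention of Markov suggests you are aware of this, and the polynomial prefactor is absorbed by taking a slightly smaller $b$.
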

\section{Estimates of norms with cutoff function}
\label{sec:norm-cutoff}

We introduce two technical lemmas.
\begin{lemma}
  \label{lemma:cutoff-equiv}
  Let $\cutoff$ be
  defined as in \eqref{eq:cutoff-prop} and let $\beta\in [0,1)$.
  Then, there exists $C>0$ such that with $\Omega^L_0$ defined in \eqref{eq:SLzero},
  it holds, for all $w\in H^1_{\beta}(\Omega)$ and all $L\in\mathbb{N}$, that
  \begin{equation*}
    \| \cutoff w\|_{H^1_\beta(\Omega)} \leq C \| w\|_{H^1_\beta (\Omega\setminus \Omega^L_0 )}.
  \end{equation*}
\end{lemma}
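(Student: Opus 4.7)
The plan is to expand the weighted norm on the left-hand side via its definition, apply the Leibniz rule to $\nabla(\cutoff w)$, and then reduce each of the resulting pieces to the weighted norm of $w$ on $\Omega\setminus\Omega^L_0$. A convenient feature of the cutoff is that $\cutoff \equiv 0$ on $\Omega^L_0$, so every term involving $\cutoff$ or its derivative is automatically supported in $\Omega\setminus \Omega^L_0$; restricting the integration domain is then free of charge.

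The two easy contributions are straightforward. Since $0\le \cutoff\le 1$, one has $\|r^{\beta-1}\cutoff w\|_{L^2(\Omega)} \le \|r^{\beta-1} w\|_{L^2(\Omega\setminus\Omega^L_0)}$ and, by the same token, $\|r^\beta \cutoff \nabla w\|_{L^2(\Omega)} \le \|r^\beta \nabla w\|_{L^2(\Omega\setminus\Omega^L_0)}$. These account for two of the pieces produced by Leibniz and by the definition of $\|\cdot\|_{H^1_\beta}$.

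The central step, and the main technical obstacle, is bounding the Leibniz cross term $\|r^\beta w \nabla \cutoff\|_{L^2(\Omega)}$ by $\|r^{\beta-1} w\|_{L^2(\Omega\setminus\Omega^L_0)}$. Since $\cutoff$ is piecewise affine and constant on every element of $\lay_0\cup\lay_{\mathrm{int}}$, the gradient $\nabla \cutoff$ is supported on the transition layer $\lay_1\subset \Omega\setminus\Omega^L_0$. I would then verify, patch by patch, that every $K\in \lay_1$ satisfies $r(x)\sim \sigma^L$ for $x\in K$ while simultaneously $K$ has extent of order $\sigma^L$ in the direction across which $\cutoff$ transitions. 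This follows from Definition \ref{def:admissible-patterns}: for an edge patch the $(L-1)$-st and $L$-th layers in the direction normal to the singular edge have widths $\sigma^{L-1}(1-\sigma)$ and $\sigma^L$, both of order $\sigma^L$; the vertex and vertex-edge patches admit analogous statements in the isotropic and mixed senses, respectively. Consequently $|\nabla \cutoff|\le C\sigma^{-L}\le C' r^{-1}$ on $\lay_1$, with $C'$ depending only on $\sigma$ and the patch geometry, and substitution yields $\|r^\beta w \nabla \cutoff\|_{L^2(\Omega)}^2 \le C\int_{\lay_1} r^{2(\beta-1)}|w|^2\,dx\le C\|r^{\beta-1}w\|_{L^2(\Omega\setminus\Omega^L_0)}^2$.

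Combining the three estimates yields a constant $C$ independent of $L$. The only subtlety I anticipate lies in the case analysis confirming $|\nabla \cutoff|\lesssim r^{-1}$ across all refinement patterns, especially in the anisotropic edge and vertex-edge patches where tangential and normal element sizes differ dramatically; but only the direction of geometric refinement contributes to $\nabla \cutoff$, and in that direction the element widths along the transition layer are bounded below by a uniform multiple of $r$, which is all that is needed.
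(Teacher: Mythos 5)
Your proposal is correct and follows essentially the same route as the paper's proof: bound $\cutoff$ by $1$ for the two direct terms, and for the Leibniz cross term combine $\|\nabla \cutoff\|_{L^\infty}\lesssim\sigma^{-L}$ with the fact that $r\lesssim\sigma^L$ on $\supp(\nabla\cutoff)$ to absorb the gradient into the weight, which is exactly what the paper does (phrasing it as $\supp(\nabla\cutoff)\subset S_{c\sigma^L}\setminus\Omega^L_0$ rather than as the pointwise bound $|\nabla\cutoff|\lesssim r^{-1}$, but the two formulations are equivalent). The only nontrivial geometric input in both arguments is that the transition layer has thickness comparable to $\sigma^L$ and lies at distance $O(\sigma^L)$ from $\partial\Omega$, which you identify and justify patchwise.
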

\begin{proof}
  By definition of $\cutoff$ we have for all $L\geq 1$
  \begin{equation*}
    \| \cutoff \|_{L^\infty(\Omega)} =1, \qquad \| \nabla \cutoff \|_{L^\infty(\Omega)} \simeq \sigma^{-L}.
  \end{equation*}
   In addition, there exists $c>0$ such that for all $L\geq 1$
  \begin{equation*}
    \supp(\nabla\cutoff) \subset S_{c\sigma^L} \setminus \Omega^L_0, 
          \qquad \supp(1-\cutoff) \subset S_{c\sigma^L}, \qquad \supp(\cutoff) \subset \Omega\setminus \Omega_{0}^L.
  \end{equation*}
  Hence, for all $L\geq 1$,
  \begin{align*}
    &\| r^{\beta-1}\cutoff w\|^2_{L^2(\Omega)} + 
    \| r^{\beta}\nabla(\cutoff w)\|^2_{L^2(\Omega)}\\
    &\quad \leq
    \| r^{\beta-1} w\|^2_{L^2(\Omega\setminus \Omega_{0}^L)} + 
      \|\nabla\cutoff\|^2_{L^\infty(\Omega)}
      \| r^{\beta} w\|^2_{L^2(S_{c\sigma^L}\setminus \Omega_{0}^L)}
      +
      \| r^{\beta} \nabla w\|^2_{L^2(\Omega\setminus \Omega_{0}^L)}
    \\ & \quad \lesssim
         \|  w\|^2_{H^1_\beta(\Omega\setminus \Omega_{0}^L)} + 
      \sigma^{-2L}\| r^{\beta} w\|^2_{L^2(S_{c\sigma^L}\setminus \Omega_{0}^L)}
    \\ & \quad \lesssim
         \|  w\|^2_{H^1_\beta(\Omega\setminus \Omega_{0}^L)} + 
      c^2\| r^{\beta-1} w\|^2_{L^2(S_{c\sigma^L}\setminus \Omega_{0}^L)},
  \end{align*}
with constants hidden in $\lesssim$ independent of $L$.
  \qed
\end{proof}
\begin{lemma}
  \label{lemma:1-cutoff}
  Let $\cutoff$ be
  defined as in \eqref{eq:cutoff-prop} and let $\beta\in [0,1)$.
  Then, there exist $C, c>0$ independent of $L$ such that, 
  for all $w\in H^1_{\beta}(\Omega)$ and all $L\in\mathbb{N}$,
  \begin{equation*}
    \| (1-\cutoff) w\|_{H^1_\beta(\Omega)} \leq C \| w\|_{H^1_\beta(S_{c\sigma^L})}.
  \end{equation*}
\end{lemma}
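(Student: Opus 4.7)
The plan is to expand $(1-\cutoff)w$ via the product rule and bound the resulting pieces using the geometric structure of $\cutoff$, mirroring the argument for Lemma~\ref{lemma:cutoff-equiv}.

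First I would establish two geometric facts about $\cutoff$ that follow from its definition in \eqref{eq:cutoff-prop} together with the structure of the geometric boundary-refined mesh $\Tg$ (Definition~\ref{def:bdylayer-mesh}): there is a constant $c>0$, independent of $L$, such that $\supp(1-\cutoff)\subset \overline{S_{c\sigma^L}}$, and $\|\nabla \cutoff\|_{L^\infty(\Omega)}\lesssim \sigma^{-L}$. Moreover, $\supp(\nabla \cutoff)$ is contained in the transition layer $\lay_1$, on which $r(x)\sim \sigma^L$. These follow from the fact that every element of $\lay_0\cup \lay_1$ has diameter and distance to $\partial \Omega$ of order $\sigma^L$, a direct consequence of the $L$ levels of geometric refinement prescribed by the patch catalog~${\mathfrak P}$.

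Next, split
\begin{equation*}
\|(1-\cutoff)w\|_{H^1_\beta(\Omega)}^2
= \|r^{\beta-1}(1-\cutoff)w\|_{L^2(\Omega)}^2
+ \|r^{\beta}\nabla\bigl((1-\cutoff)w\bigr)\|_{L^2(\Omega)}^2.
\end{equation*}
The first summand is controlled by $\|r^{\beta-1}w\|_{L^2(S_{c\sigma^L})}^2$, since $0\leq 1-\cutoff\leq 1$ and $\supp(1-\cutoff)\subset S_{c\sigma^L}$. For the gradient term, the Leibniz rule and the triangle inequality give
\begin{equation*}
\|r^{\beta}\nabla\bigl((1-\cutoff)w\bigr)\|_{L^2(\Omega)}
\leq \|r^{\beta}(\nabla \cutoff)\,w\|_{L^2(\Omega)}
+ \|r^{\beta}(1-\cutoff)\nabla w\|_{L^2(\Omega)}.
\end{equation*}
The second piece is bounded by $\|r^{\beta}\nabla w\|_{L^2(S_{c\sigma^L})}$. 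For the first piece, the bound $\|\nabla \cutoff\|_{L^\infty}\lesssim \sigma^{-L}$ combined with the pointwise equivalence $r\sim \sigma^L$ on $\supp(\nabla \cutoff)$ yields $r^{\beta}\lesssim \sigma^{L}\, r^{\beta-1}$ on this set, hence
\begin{equation*}
\|r^{\beta}(\nabla \cutoff)\,w\|_{L^2(\Omega)}
\lesssim \|r^{\beta-1}w\|_{L^2(S_{c\sigma^L})}.
\end{equation*}

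Adding these estimates gives the asserted bound with a constant independent of $L$. The only mildly delicate point is the trade of $\|\nabla \cutoff\|_{L^\infty}$ against $r^{\beta}$ on $\supp(\nabla \cutoff)$, which absorbs the $\sigma^{-L}$ blow-up of the cutoff's gradient into the weighted norm; this step relies crucially on the \emph{geometric} boundary refinement producing transition-layer elements at distance $\sim \sigma^{L}$ from $\partial \Omega$, and is precisely the mechanism already exploited in the proof of Lemma~\ref{lemma:cutoff-equiv}, so no new ingredient is required.
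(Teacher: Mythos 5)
Your proof is correct and follows essentially the same route as the paper's (the paper simply refers back to the preceding lemma and records the two displayed bounds). The one cosmetic point is that you invoke $r\sim\sigma^L$ on $\supp(\nabla\cutoff)$, whereas your actual step $r^\beta\lesssim\sigma^L r^{\beta-1}$ only uses the upper bound $r\lesssim\sigma^L$ there, which is exactly what the paper uses.
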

\begin{proof}
  The proof proceeds along the same lines as the proof of the previous lemma. We
  have
  \begin{equation*}
    \| r^{\beta-1} (1-\cutoff) w\|_{L^2(\Omega)} \leq 
\| r^{\beta-1}  w\|_{L^2(S_{c\sigma^L})},
  \end{equation*}
  where $c$ is defined as in the preceding proof such that 
  $r(x)\leq c\sigma^L$ holds for all $x\in \supp(1-\cutoff)$.
  Similarly, we obtain
  \begin{equation*}
    \| r^{\beta}\nabla((1-\cutoff )w)\|^2_{L^2(\Omega)}\lesssim 
\| r^{\beta}\nabla w\|^2_{L^2(S_{c\sigma^L})}
+ c^2 \| r^{\beta-1}w \|^2_{L^2(S_{c\sigma^L})},
  \end{equation*}
  which finishes the proof.\qed
\end{proof}
\bibliographystyle{plain}
\bibliography{biblio,bibliography}
\end{document}